\newenvironment{manualtheorem}[1]{%
  \IfBlankTF{#1}
    {}
    {}%
  \manualtheoreminner
}{\endmanualtheoreminner}
\newenvironment{manualproposition}[1]{%
  \IfBlankTF{#1}
    {}
    {}%
  \manualpropinner
}{\endmanualpropinner}
\newcommand{\R}{\mathbb{R}}
\newcommand{\Q}{\mathbb{Q}}
\newcommand{\Z}{\mathbb{Z}}
\theoremstyle{definition}
\newcommand{\F}{\mathbb{F}}
\renewcommand{\P}{\mathbb{P}}
\newcommand{\Jac}{\textnormal{Jac}}
\newtheorem{theorem}{Theorem}[section]
\newtheorem{lemma}[theorem]{Lemma}
\newtheorem{prop}{Proposition}
\newtheorem{conj}{Conjecture}
\newtheorem{definition}{Definition}
\newtheorem{example}{Example}
\newtheorem{remark}{Remark}
\DeclareMathOperator{\Rank}{Rank}
\title{Rational Points on a Family of Genus 3 Hyperelliptic Curves}
\author{Roberto Hernandez}
\affil{University of California, Irvine}
\date{\vspace{-5ex}}
\begin{document}

\maketitle

\begin{abstract}
\noindent We compute the rational points on certain members of the following family of hyperelliptic curves
\begin{eqnarray*}
    C_a \colon y^2 = x^8 + (4-4a^4) x^6 + (8a^4 + 6)x^4 + (4-4a^4)x^2 + 1
\end{eqnarray*}
via the method first developed by Dem'yanenko \cite{dem1966rational} and then further generalized by Manin \cite{manin1969p}. In particular, we show that the method of Chabauty--Coleman, while applicable to certain members of this family, is not the most efficient way of computing $C_a(\Q)$. We adapt the approach of \cite{kulesz1999application}, incorporating root numbers to further restrict the possible ranks of the elliptic curves arising in the Jacobian decomposition.
\end{abstract}


\section{Introduction}
In 1983, Faltings' \cite{faltings1983endlichkeitssatze} proved Mordell's Conjecture stating that any non-singular curve $C$ defined over a number field of genus greater than one has finitely many rational points. Since then, there have been simplifications to his original argument, most notably done through the work of Bombieri \cite{bombieri1990mordell} and Vojta \cite{vojta1991siegel}. While Faltings' theorem marked a milestone in arithmetic geometry, its limitation is that it does not give an instructive way to obtain all the rational points on $C$. Effectively finding rational points on curves of genus greater than one is an active area of research, with many working to better understand or improve current methods. In practice, the most powerful tool for computing rational points on curves has been the Chabauty--Coleman method. In this work, we focus on the method developed by Dem'yanenko \cite{dem1966rational} in 1968, extended by Manin \cite{manin1969p} in 1969. We present an explicit family of curves 
\begin{eqnarray*}
    C_a \colon y^2 = x^8 + (4-4a^4) x^6 + (8a^4 + 6)x^4 + (4-4a^4)x^2 + 1,
\end{eqnarray*}
for which the method of Dem'yanenko--Manin proves particularly effective. In this setting, the method of Chabauty--Coleman does not generally apply in a useful way. In particular, the Jacobian of $C_a$ is isogenous to a product of elliptic curves, written $\Jac(C_a) \sim E_a \times E_a \times E'$, and Chabauty--Coleman can only be applied in the special cases where $\Rank(E_a) = 0$ or $\Rank(E') = 0$. In those cases, however, the rational points on $C_a$ can already be determined by pulling back the (finite) rational points on either $E_a$ or $E'$. The more interesting case is where $\Rank(E_a) = 1$ and $\Rank(E') \neq 0$, because the conditions to apply Chabauty--Coleman are not met, and the Dem'yanenko--Manin method becomes essential. Concretely, we will show that
\begin{eqnarray*}
    C_{237}(\Q) = \{(\pm1 : \pm 4: 1), (0: \pm 1: 1), (1: \pm1 : 0)\},
\end{eqnarray*}
to showcase a particular instance of the method. We will also along the way prove certain facts about a family of elliptic curves which arise in our construction. In particular, we prove the following results about the Mordell-Weil group of the particular elliptic curve $E_a \colon y^2 = x^3 + 2a^2x^2 + (a^4-4)x$.
\begin{manualproposition}{1}
    Suppose that $a \equiv 0 \textnormal{ (mod 3)}$, then $E_a(\Q)_\textnormal{tor} \cong (\Z/2\Z)^2$.
\end{manualproposition}

\begin{manualtheorem}{4.1}
    Suppose that $a^2-2$ and $a^2 + 2$ are simultaneously prime, then $\Rank(E_a) = 1$.
\end{manualtheorem}

\noindent We recall that that the method of Chabauty-Coleman uses $p$-adic techniques to produce $p$-adic locally analytic functions, and relies on the assumption that rank of the Jacobian of $C$ over $\Q$, say $r$, is strictly smaller than the genus of $C$. In 1941, Chabauty \cite{chabauty1941points} proved that if the rank of the Jacobian is smaller than $g$, then the set $C(\Q_p) \cap \overline{\Jac(\Q)}$ is finite. Moreover, in 1985, Coleman \cite{coleman1985} showed that if you also assume that you have a prime $p > 2g$ of good reduction, then you can place an upper bound on the number of rational points on $C$.

\begin{manualtheorem}{}[Chabauty-Coleman]
    Let $C$ be a smooth, projective, geometrically integral curve over $\Q$ and let $r \coloneq \Rank \Jac_C(\Q)$. If $r < g$ and $p > 2g$ is a prime of good reduction, then $C(\Q_p) \cap \overline{\Jac_C(\Q)}$ is finite and $\#C(\Q) \leq \#C(\F_p) + 2g - 2$.
\end{manualtheorem}

\noindent We highlight a family of curves for which we cannot apply this theorem because $\Rank(\Jac_{C_a}(\Q)) \ge g$, but the method of Dem'yanenko--Manin gives a procedure to find $C_a(\Q)$. For that, we will force the Jacobian of the curves to fully decompose, and then aim to restrict the ranks of the elliptic curves which appear in the decomposition.

\subsubsection*{Code}
\noindent Throughout this project, we used the computer algebra system Magma \cite{bosma1997magma}. All calculations and claims can be found at the following GitHub repository: 

\begin{center}
    \href{https://github.com/robertohernandez8/rational-points-genus-3-hyperelliptic-curves.git}{https://github.com/robertohernandez8/rational-points-genus-3-hyperelliptic-curves.git}
\end{center} 

\section{Background}
\noindent First, we provide some relevant background material which can be found in more detail for instance in \cite{hindry2013diophantine}. Everything we discuss in this article will be over $\Q$, unless stated otherwise. 

\noindent An elliptic curve is an abelian variety of dimension one, equipped with a distinguished point denoted $\mathcal{O} = [0:1:0]$. It can be described concretely via its Weierstrass equation
\begin{equation*}
    E \colon y^2 + a_1 xy + a_3 y = x^3 + a_2 x^2 + a_4 x + a_6. 
\end{equation*}

\noindent Two important invariants of an elliptic curve are its discriminant, $\Delta$, and the $j$-invariant. We recall some of the invariants associated to elliptic curves below.

\[
\begin{aligned}
b_2 &= a_1^2 + 4a_2, \\
b_4 &= 2a_4 + a_1 a_3, \\
b_6 &= a_3^2 + 4a_6, \\
b_8 &= a_1^2 a_6 + 4a_2 a_6 - a_1 a_3 a_4 + a_2 a_3^2 - a_4^2, \\
c_4 &= b_2^2 - 24 b_4, \\
c_6 &= -b_2^3 + 36 b_2 b_4 - 216 b_6, \\
\Delta &= -b_2^2 b_8 - 8 b_4^3 - 27 b_6^2 + 9 b_2 b_4 b_6, \\
j(E) &= \frac{c_4^{\,3}}{\Delta}.
\end{aligned}
\]

\noindent In particular, the $j$-invariant controls the isomorphism class of an elliptic curve over $\overline{\Q}$, while the discriminant helps detect primes of bad reduction. We also recall that a hyperelliptic curve over $\Q$ is a smooth, projective curve of genus $g \geq 2$ defined over $\Q$ which admits a degree $2$ map to $\P^1$. Equivalently, over $\Q$, we can choose a model in affine coordinates of the form
$$C \colon y^2 = f(x),$$
where $f(x)$ is a monic polynomial of degree $2g + 1$ or $2g + 2$.

\noindent Let $P \in \P^n(\Q)$ and write $P = (x_0, x_1, \dots, x_n)$ with $x_i \in \Z$ and $\gcd(x_0, x_1, \dots, x_n) = 1$. We define the height of $P$ to be
$$H(P) = \max\{|x_0|, |x_1|, \dots, |x_n|\}.$$
The set 
$$\{P \in \P^n(\Q) : H(P) \leq B\}$$
is finite, since there are only finitely many integers $x$ satisfying $|x| \leq B$. 
\begin{definition}\label{multiplicativeheight}
    Let $k$ be a number field and let $P \in \P^n(k)$. The multiplicative height of $P$ is the quantity
    $$H_k(P) = \prod_{v \in M_k} \max\{||x_0||_v, ||x_1||_v, \dots, ||x_n||_v\}$$
    and the logarithmic height is defined as
    $$h_k(P) = \log H_k(P).$$
\end{definition}

\begin{theorem}\label{morphismheight}
    Let $X$ be a closed subvariety of $\P^n$ and let $\varphi \colon X \to \P^n$ be a morphism of degree $d$. Then
    $$h(\varphi(P)) = dh(P) + O(1) \hspace{.5in} \text{ for all } P \in X(\overline{\Q}).$$
\end{theorem}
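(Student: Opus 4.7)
The plan is to prove the two inequalities $h(\varphi(P)) \leq d\,h(P) + O(1)$ and $h(\varphi(P)) \geq d\,h(P) + O(1)$ separately, working place by place with the multiplicative height from Definition \ref{multiplicativeheight}. Write $\varphi = [f_0 : f_1 : \cdots : f_n]$, where each $f_i$ is a homogeneous polynomial of degree $d$ in the coordinates of $\P^n$, and the $f_i$ have no common zero on $X$ (this is what it means for $\varphi$ to be a morphism, not merely a rational map).

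For the upper bound, I would fix a number field $k$ containing the coordinates of $P$ and estimate each local factor of $H_k(\varphi(P))$. By the triangle inequality (archimedean) or ultrametric inequality (non-archimedean), we have $\|f_i(P)\|_v \leq C_v \cdot \max_j \|x_j\|_v^d$, where $C_v$ depends only on the coefficients of the $f_i$ and equals $1$ for all but finitely many $v$. Taking the product over $v \in M_k$ and using that only finitely many places contribute a nontrivial constant yields $H_k(\varphi(P)) \leq C \cdot H_k(P)^d$ for a constant $C$ independent of $P$. Taking logarithms and dividing by $[k:\Q]$ (to pass from $H_k$ to the absolute height $H$) gives the desired upper bound.

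For the lower bound, the key input is the projective Nullstellensatz applied to the homogeneous ideal $I(X) \subset \overline{\Q}[x_0, \dots, x_n]$. Since the $f_i$ have no common zero on $X$, the ideal $(f_0, \dots, f_n) + I(X)$ contains a power of the irrelevant ideal $(x_0, \dots, x_n)$. Concretely, for some integer $e \geq 0$ and each coordinate index $j$, there exist homogeneous polynomials $g_{ij}$ (of degree $e$) and $h_j \in I(X)$ such that
\begin{equation*}
    x_j^{d+e} = \sum_{i=0}^n g_{ij}(x) f_i(x) + h_j(x).
\end{equation*}
Evaluating at $P \in X(\overline{\Q})$ kills the $h_j$ term, and then for each place $v$ one estimates $\|x_j(P)\|_v^{d+e}$ from above by a constant times $\max_i \|f_i(P)\|_v \cdot \max_k \|x_k(P)\|_v^{e}$. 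Dividing through by $\max_k \|x_k(P)\|_v^e$ and taking the maximum over $j$ yields $\max_j \|x_j(P)\|_v^d \leq C_v' \cdot \max_i \|f_i(P)\|_v$, and the same product-over-places argument as before completes the proof.

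The main obstacle is the lower bound: one must resist the temptation to treat $\varphi$ as a rational map and instead genuinely exploit the base-point-free hypothesis through the Nullstellensatz identity above. Verifying that only finitely many places $v$ yield a nontrivial local constant $C_v'$ (so that the global product converges) requires noting that the coefficients of the $g_{ij}$ lie in some fixed number field and are $v$-adic units outside a finite set of primes; this bookkeeping, though routine, is where all the uniformity in $P$ is really being used.
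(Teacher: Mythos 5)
The paper does not prove this statement --- it is quoted as background and deferred to the cited reference \cite{hindry2013diophantine} --- and your argument is exactly the standard proof found there: the place-by-place triangle/ultrametric estimate for the upper bound, and the projective Nullstellensatz identity $x_j^{d+e}=\sum_{i} g_{ij}f_i + h_j$ with $h_j\in I(X)$ (the one point where the base-point-free hypothesis genuinely enters) for the lower bound. The proof is correct; the bookkeeping you flag at the end --- that the archimedean constants also absorb the number of monomials, that only finitely many places contribute a nontrivial local constant, and that the resulting global constant is independent of $P$ and of the field $k$ under the standard normalization of absolute values --- is routine and does not affect the argument.
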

\noindent It is important to note here that the $O(1)$ depends only on the map $\varphi$, but it independent of the point $P$.

\begin{definition}\label{heightonV}
    Let $\varphi \colon V \to \P^n$ be a morphism. The height on $V$ relative to $\varphi$ is the function
    $$h_\varphi \colon V(\Q) \to [0, \infty), \hspace{.5in} h_\varphi(P) = h(\varphi(P)),$$
    where $h \colon \P^n(\overline{\Q}) \to [0,\infty)$ is the height function on projective space that we defined in the previous section. 
\end{definition}
\noindent In this way, we see that defining heights on varieties is very natural once you have an embedding of your variety into projective space. Recall that any nonconstant function $f$ in the function field $K(E)$ determines a surjective morphism from $E$ to $\P^1$. Concretely, $f$ gives the map
$$f \colon E \to \P^1, \hspace{.5in} P \mapsto \begin{cases}
    [1,0] \text{ if } P \text{ is a pole of } f \\
    [f(P),1] \text{ otherwise. }  
\end{cases}$$

\begin{definition}\label{heightonE}
    Let $E/\Q$ be an elliptic curve and let $f$ be a function in the function field of $E$. The height on $E$ relative to $f$ is the function
    $$h_f \colon E(\overline{\Q}) \to \R, \hspace{.5in} h_f(P) = h(f(P)).$$
\end{definition}

\noindent We are now ready to formally state the main result we use in our exploration, and explain how the method simplifies in the case of rank 1. Let $C$ be a smooth projective curve of genus $g \geq 2$ over a number field $K$. Let $A$ be an abelian variety over $K$, and let $f_1, \dots, f_m$ be morphisms from $C$ to $A$ defined over $K$. Recall that a morphism is a rational map with no base points. 
\begin{definition}
    Let $f_1, \dots, f_m$ be morphisms from $C$ to $A$. We say the $f_i$ are independent if $\sum n_i f_i$ is the constant map, then $n_i = 0$ for all $i$. 
\end{definition}
\noindent In practice, we do not typically use the definition to check whether a set of morphisms are independent, instead we use a lemma due to Cassels \cite{cassels1968theorem}.
\begin{lemma}[Cassels]\label{casselslemma}
    Let $f_1, \dots, f_m$ be morphisms from $C$ to $A$. Let $D$ be the matrix whose coefficients are
    $$\langle f_i, f_j \rangle = \frac{1}{2} \left( d(f_i + f_j) - d(f_i) - d(f_j) \right),$$
    where $d(f_i)$ denotes the degree of the morphism. The $m$ morphisms are independent if and only if $\det(D) \neq 0$.
\end{lemma}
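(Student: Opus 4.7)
The plan is to reinterpret the degree function $d(\cdot)$ on $\Hom(C, A)$ as the quadratic form of a positive-definite inner product on a suitable real vector space containing the $f_i$, after which the lemma reduces to the standard linear algebra fact that vectors are linearly independent if and only if their Gram determinant is nonzero.

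First I would fix a base point $P_0 \in C(K)$ and let $j \colon C \to J := \Jac(C)$ be the associated Abel-Jacobi embedding. A classical fact says that every morphism $f \colon C \to A$ factors uniquely as $f = \tilde f \circ j + f(P_0)$, where $\tilde f \colon J \to A$ is a homomorphism of abelian varieties, and that $f \mapsto \tilde f$ is additive. In particular $\sum n_i f_i$ is constant on $C$ if and only if $\sum n_i \tilde f_i = 0$ in $\Hom(J, A)$, so independence of the $f_i$ in the sense defined above is equivalent to $\Z$-linear (equivalently $\R$-linear) independence of the $\tilde f_i$ in the finitely generated free abelian group $\Hom(J, A)$.

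Next I would transport $d$ to the Jacobian side. Fixing a symmetric ample line bundle $L$ on $A$, one has, up to a fixed positive scalar, $d(f) = \deg(f^* L) = \deg(j^* \tilde f^* L)$, and the theorem of the square gives $d(n \tilde f) = n^2 d(\tilde f)$ together with bilinearity of $(\tilde f, \tilde g) \mapsto \tfrac{1}{2}\bigl(d(\tilde f + \tilde g) - d(\tilde f) - d(\tilde g)\bigr)$. This pairing agrees with the $\langle f, g\rangle$ of the statement and extends to a symmetric bilinear form on $\Hom(J, A) \otimes \R$. The essential input is that this form is positive definite; this follows from positivity of the Rosati involution on $\End(J)$ relative to the canonical principal polarization, transported to $\Hom(J, A)$ by composition with a polarization of $A$, or equivalently from positivity of the N\'eron-Tate height associated to $L$.

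With positive definiteness in hand, the matrix $D$ is literally the Gram matrix of $(\tilde f_1, \ldots, \tilde f_m)$ for this inner product, and the conclusion is then formal: a vector $(n_1, \ldots, n_m) \in \Z^m$ lies in $\ker D$ iff $\langle \sum n_i \tilde f_i, \sum n_i \tilde f_i\rangle = 0$, iff $\sum n_i \tilde f_i = 0$, so $\det D \neq 0$ precisely when no nontrivial $\Z$-relation holds among the $\tilde f_i$. The main obstacle is precisely the positive-definiteness step, which is not elementary but is a standard consequence of Rosati positivity; modulo this input, the rest is just polarization of a quadratic form and the Gram-matrix criterion for linear independence.
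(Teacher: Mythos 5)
The paper does not actually prove this lemma: it is quoted from Cassels' 1968 paper \emph{On a theorem of Dem'janenko} and used as a black box, so there is no internal proof to compare against. Judged on its own, your argument is correct and is essentially the standard proof: factor each $f_i$ through the Abel--Jacobi map as $f_i = \tilde f_i \circ j + \text{const}$, observe that independence of the $f_i$ is $\Z$-linear independence of the $\tilde f_i$ in $\Hom(\Jac(C),A)$, show that $d$ is a positive definite quadratic form on that lattice, and conclude by the Gram criterion. A few points deserve tightening. First, ``degree of the morphism'' only literally makes sense when $\dim A = 1$ (the case actually used in the paper, where $A = E_a$); your reading $d(f) = \deg_C(f^*L)$ for an ample symmetric $L$ is the right generalization, and for an elliptic curve with $L = \mathcal{O}(\mathcal{O}_A)$ it recovers the usual degree on the nose, not just up to scalar. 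Second, the quadraticity of $\tilde f \mapsto \deg_C(j^*\tilde f^* L)$ is a consequence of the theorem of the cube (bi-additivity of $(\tilde f,\tilde g)\mapsto (\tilde f+\tilde g)^*L \otimes \tilde f^*L^{-1}\otimes \tilde g^*L^{-1}$ up to algebraic equivalence), not the theorem of the square, which you only need to see that the translation part of $f_i$ does not affect $d(f_i)$. Third, you do not need Rosati positivity: ampleness of $L$ already gives $d(\tilde f) > 0$ for every nonzero $\tilde f$ (since $\tilde f\circ j$ is then non-constant), and this positivity on the lattice, combined with rationality of the Gram matrix, upgrades to positive definiteness on the real span because the radical of a positive semidefinite rational form is a rational subspace. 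Finally, your step ``$n^T D n = 0$ iff $Dn = 0$'' silently uses Cauchy--Schwarz for the positive semidefinite form; with that said explicitly the equivalence $\det D \neq 0 \Leftrightarrow$ no nontrivial integral relation is complete.
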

\noindent For our purposes, we will work with explicit morphisms between our hyperelliptic curve $C_a$ and an elliptic curve, $E_a$, and thus these degrees are computable through the use of the computer algebra system Magma. 
\begin{theorem}[Dem'yanenko--Manin]
    Let $C$ be a curve over a number field $K$ and $A$ be an abelian variety over $K$. Let $f_1, \dots, f_m$ be morphisms from $C$ to $A$ that are defined over $K$ and independent. Assume further that $\Rank(A) < m$. Then $C(K)$ is finite and it is possible to effectively bound the height of the points in $C(K)$.
\end{theorem}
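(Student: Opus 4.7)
The plan is to exploit the N\'eron--Tate canonical height $\hat h$ on $A(K)$, which is a positive semidefinite quadratic form (positive definite modulo torsion). The key observation is that for any $P \in C(K)$, the rank hypothesis forces the $m$ vectors $f_1(P),\ldots,f_m(P)$ to be linearly dependent in $A(K)\otimes\mathbb{R}$, so their Gram matrix under the N\'eron--Tate pairing must be singular; but independence of the $f_i$, via Cassels' lemma, forces that same Gram matrix to be close to a nonsingular matrix after scaling by $h(P)$. Comparing these two facts will bound $h(P)$.

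First I would fix a Weil height $h : C(K)\to\mathbb{R}$ from a projective embedding of $C$, and choose a symmetric ample line bundle on $A$ to define $\hat h$ via the height machine. Theorem~\ref{morphismheight} applied to each morphism $f_i : C \to A$ and to each sum $f_i + f_j$ would give comparisons of the form
\[
\hat h(f_i(P)) = d(f_i)\, h(P) + O(1), \qquad \hat h((f_i+f_j)(P)) = d(f_i+f_j)\, h(P) + O(1),
\]
with the $O(1)$ depending on the morphisms but not on $P$. Because $\hat h$ is quadratic, the associated bilinear pairing $\langle Q,R\rangle := \tfrac12(\hat h(Q+R) - \hat h(Q) - \hat h(R))$ is well-defined on $A(K)$, and substituting the two displayed estimates gives
\[
\langle f_i(P),\, f_j(P)\rangle = \langle f_i, f_j\rangle\, h(P) + O(1),
\]
where the pairing on the right is precisely the Cassels pairing of Lemma~\ref{casselslemma}.

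Next, by the Mordell--Weil theorem, $A(K)$ is finitely generated of rank $r = \Rank(A)$, and $\hat h$ descends to a positive definite inner product on the $r$-dimensional real vector space $A(K)\otimes\mathbb{R}$. Since $r < m$, any $m$ vectors in this space are linearly dependent, so the Gram matrix $G(P) = (\langle f_i(P), f_j(P)\rangle)_{1\le i,j\le m}$ satisfies $\det G(P) = 0$ for every $P \in C(K)$. On the other hand, the bilinear estimate above rewrites as $G(P) = h(P)\, D + E(P)$, where $D$ is the Cassels matrix and every entry of $E(P)$ is bounded by a constant independent of $P$, so expanding the determinant yields
\[
0 = \det G(P) = \det(D)\, h(P)^m + O\!\left(h(P)^{m-1}\right).
\]
Since independence of the $f_i$ guarantees $\det D \neq 0$ by Lemma~\ref{casselslemma}, this inequality forces $h(P) \le B$ for an effectively computable constant $B$. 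Northcott's theorem, asserting finiteness of $\{P \in C(K) : h(P) \le B\}$, then finishes the proof and furnishes the effective height bound.

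The step I expect to be the main obstacle is making all of these $O(1)$'s simultaneously effective: a purely qualitative proof only uses their existence, but the effective bound on $h(P)$ demands that one track the implied constants in Theorem~\ref{morphismheight} through each $f_i$ and each sum $f_i + f_j$. In practice this is done by choosing a concrete projective model of $A$, running the height machine explicitly, and carefully estimating the resulting constants; it is standard but technical, and is typically the real computational bottleneck of any Dem'yanenko--Manin application.
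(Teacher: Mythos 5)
Your argument is correct, and it is in essence the standard general proof of Dem'yanenko--Manin (as in Dem'yanenko, Manin, and Serre's \emph{Lectures on the Mordell--Weil Theorem}); the paper itself does not prove the theorem but defers to \cite{girard2005computation} and \cite{kulesz1999application}, and instead only develops the special case $m=2$, $\Rank(A)=1$ that it actually uses. The two routes diverge there. You make the rank hypothesis act through the vanishing of the $m\times m$ Gram determinant of the $f_i(P)$ under the N\'eron--Tate pairing, and extract the height bound from $0=\det(D)h(P)^m+O(h(P)^{m-1})$ with $\det(D)\neq 0$ by Lemma \ref{casselslemma}; this works for any $m>\Rank(A)$ and for morphisms of unequal degrees. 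The paper's specialization instead writes $f_i(P)=[n_i]R+T_i$ for a generator $R$ of the free part, so that $\hat h(f_i(P))=n_i^2\hat h(R)$, and bounds $|n_1^2-n_2^2|\hat h(R)$ by the triangle inequality using two ``$|\hat h-h|\le C$'' estimates plus a bound on $|h(f_1(P))-h(f_2(P))|$; note that this last step silently uses $d(f_1)=d(f_2)$ so that the leading terms $d\,h(P)$ cancel, whereas your determinant expansion needs no such coincidence. What the paper's version buys is explicitness: all constants are concrete (the Silverman bound, the explicit height comparison of Section 5), which is exactly the ``main obstacle'' you flag at the end. Two small caveats on your write-up: the comparison $\hat h(f_i(P))=d(f_i)h(P)+O(1)$ requires $d(f_i)$ to be the degree of $f_i^*$ of the symmetric ample class defining $\hat h$ (not merely the degree of $f_i$ as a finite map), and if the height $h$ on $C$ is attached to a line bundle only algebraically equivalent to the relevant pullbacks the error term is $O(\sqrt{h(P)})$ rather than $O(1)$ --- neither issue affects the conclusion, since the leading term still dominates.
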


\noindent For a proof of the statement and some examples carried out in practice see \cite{girard2005computation}, \cite{kulesz1999application}. The method of Dem'yanenko--Manin simplifies when the $\Rank(A) = 1$ since that would mean that we would only need two independent maps, say $f_1, f_2 \colon C \to A$. Moreover, if we let $P \in C(\Q)$ and $R$ be a generator for the free part of $A(\Q)$, then
$$\begin{cases}
    f_1(P) = [n]R + T_1 \\
    f_2(P) = [m]R + T_2
\end{cases}$$
where $T_i \in A(\Q)_{\text{tor}}$. Taking canonical heights we have
$$\begin{cases}
    \hat{h}(f_1(P)) = n^2\hat{h}(R) \\
    \hat{h}(f_2(P)) = m^2\hat{h}(R).
\end{cases}$$
Subtracting the second equation from the first and taking absolute values gives us the following relation:
$$ \left|\hat{h}(f_1(P)) - \hat{h}(f_2(P))\right| = \left|n^2\hat{h}(R) - m^2\hat{h}(R)\right| = \left|n^2 - m^2\right| \hat{h}(R). $$
If it happens to be the case that $n = \pm m$, then that would imply that $f_1(P) \pm f_2(P) \in A(\Q)_{\text{tor}}$, so we could find those points directly through computation since the torsion group is finite. Note also that we can bound this quantity from above using the triangle inequality. Indeed,
$$ \left|\hat{h}(f_1(P)) - \hat{h}(f_2(P))\right| \leq \left|\hat{h}(f_1(P)) - h(f_1(P))\right| + \left|\hat{h}(f_2(P)) - h(f_2(P))\right| + \Big|h(f_1(P)) - h(f_2(P))\Big|.$$
Thus, if $n \neq \pm m$, then combining these two expressions we can obtain an upper bound, say $B$, such that $|n^2 - m^2| < B$. It is important to note here that the constant $B$ would only depend on $f_1, f_2$, and $A$, but would be independent of $P$. In this way, we bound the possible values of $n$ and $m$, so all that would be left to do would be to find the points on $C(\Q)$ whose images on $A(\Q)$ are of the form $nR + T_i$ with $n < B$. Again, this is a finite computation that can be done using Magma.

\section{Construction of $C_a$}
The main goal of this section is to demonstrate how we obtain the family of genus 3 hyperelliptic curves, and the two independent maps to the same elliptic curve. In particular, we will arrive at the following result.
\begin{manualtheorem}{3.2}\label{3.2}
    Let $a$ be a nonzero integer and let $C_a$ be the genus 3 hyperelliptic curve
\begin{eqnarray*}
    C_a \colon y^2 = x^8 + (4-4a^4)x^6 + (8a^4 + 6)x^4 + (4-4a^4)x^2 + 1.
\end{eqnarray*}
Then, $\Jac(C_a) \sim E_a \times E_a \times E'$.
\end{manualtheorem}

\noindent Consider the quartic $g(x) = x^4 - a^2 x^2 + 1$. Notice that we can pick two distinct non-zero points $z \neq w$ such that $g(z) = g(w)$. In this way we obtain
\begin{eqnarray*}
    z^4 - a^2 z^2 + 1 &=& w^4 - a^2 w^2 + 1 \\
    z^4 - w^4 &=& a^2(z^2 - w^2) \\
    z^2 + w^2 &=& a^2,
\end{eqnarray*}
but this last equation is a circle of radius $a$ in $z$ and $w$. We can parameterize rational points on this circle by considering the line $w = zx + a$, as long as $a \neq 0$. In doing so, we have
\begin{eqnarray*}
    z^2 + (zx + a)^2 &=& a^2 \\
    z^2 + z^2x^2 + 2zxa + a^2 &=& a^2 \\
    z^2(1+x^2) &=& -2zxa \\
    z &=& -\dfrac{2xa}{x^2 + 1}.
\end{eqnarray*}
Thus, we obtain the parameterization of rational points on the circle given by 
$$\left(z(x), w(x) \right) = \left(-\dfrac{2ax}{x^2 + 1}, -\dfrac{a(x+1)(x-1)}{x^2 + 1} \right),$$ 
for any $x \in \Q$. Now, we take the function $z(x) = -\dfrac{2ax}{x^2 + 1}$ and plug it into our quartic $g(x)$. Indeed,
\begin{eqnarray*}
    g(z(x)) = \left(-\dfrac{2ax}{x^2 + 1} \right)^4 - a^2 \left(-\dfrac{2ax}{x^2 + 1} \right)^2 + 1,
\end{eqnarray*}
which after multiplying the entire equation by $(x^2 + 1)^4$ to clear denominators simplifies to 
\begin{eqnarray*}
    g(z(x))(x^2 + 1)^4 &=& x^8 + (4-4a^4)x^6 + (8a^4 + 6)x^4 + (4-4a^4)x^2 + 1.
\end{eqnarray*}
The right side of the above equation will define the $f(x)$ for our hyperelliptic curve. Concretely, our family of genus 3 hyperelliptic curves will be defined as:
\begin{eqnarray*}
    C_a \colon y^2 = x^8 + (4-4a^4)x^6 + (8a^4 + 6)x^4 + (4-4a^4)x^2 + 1.
\end{eqnarray*}
Also note that this construction arose from the first quartic $g(x) = x^4 - a^2x^2 + 1$, and if we define the curve $H_a: y^2 = x^4 - a^2x^2 + 1$, this is a genus 1 hyperelliptic curve. This implies that $H_a$ is in fact an elliptic curve and Magma can give us the Weierstrass equation and the map between the curves. In fact, we have that 
\begin{eqnarray*}
    H_a \colon y^2 = x^4 - a^2x^2 + 1 &\cong& E_a \colon y^2 = x^3 + 2a^2x^2 + (a^4-4)x \\
    (x,y) &\mapsto& (2x^2 - 2y - 1, 4x^3 - 4xy - 2x).
\end{eqnarray*}
In particular, we have constructed a curve with multiple maps to the same elliptic curve, lending itself nicely to the method of Dem'yanenko--Manin. We can in fact be very concrete and display the maps we obtain. Firstly, we obtain the maps 
\begin{eqnarray*}
    \psi_1 \colon C_a &\to& H_a \\
    (x,y) &\mapsto& \left(z(x), \dfrac{y}{(x^2+1)^2}\right) \\
    \psi_2 \colon C_a &\to& H_a \\
    (x,y) &\mapsto& \left(w(x), \dfrac{y}{(x^2+1)^2}\right)
\end{eqnarray*}
and composing $\psi_1, \psi_2$ with the isomorphism between $H_a$ and $E_a$ above, we have two maps from $C_a$ to $E_a$. From here on out, we will denote by the composition of these maps $\varphi_1$ and $\varphi_2$. 
\begin{lemma}
    The maps $\varphi_1, \varphi_2 \colon C_a \to E_a$ are independent.
\end{lemma}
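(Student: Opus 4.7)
The plan is to apply Cassels' lemma. Let $D$ be the $2 \times 2$ matrix whose entries are the pairings $\langle \varphi_i, \varphi_j \rangle$. Because $2\varphi_i = [2]_{E_a} \circ \varphi_i$ and the multiplication-by-$2$ map on an elliptic curve has degree $4$, a short computation shows $\langle \varphi_i, \varphi_i \rangle = d(\varphi_i)$. Thus it suffices to compute the three degrees $d(\varphi_1)$, $d(\varphi_2)$, and $d(\varphi_1 + \varphi_2)$ and verify that $\det D \neq 0$.

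For the diagonal entries: each $\psi_i \colon C_a \to H_a$ is of the form $(x,y) \mapsto (u(x),\, y/(x^2+1)^2)$, where $u(x)$ is one of the degree-$2$ rational functions $z(x) = -2ax/(x^2+1)$ or $w(x) = -a(x^2-1)/(x^2+1)$. Since composition with the isomorphism $H_a \to E_a$ preserves degree, we get $d(\varphi_1) = d(\varphi_2) = 2$.

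The remaining task is to compute $d(\varphi_1 + \varphi_2)$, where $+$ denotes the group law on $E_a$. I would substitute the explicit formulas for $\varphi_1$ and $\varphi_2$ into the chord-and-tangent addition law on $E_a \colon y^2 = x^3 + 2a^2 x^2 + (a^4-4)x$, reduce the resulting rational expressions modulo the defining equation of $C_a$, and read off the degree of the resulting morphism; this symbolically heavy step I would carry out in Magma. Given $d(\varphi_1) = d(\varphi_2) = 2$, we obtain
$$\det D = 4 - \bigl(\tfrac{1}{2} d(\varphi_1 + \varphi_2) - 2\bigr)^2,$$
so independence follows provided $d(\varphi_1 + \varphi_2) \notin \{0, 8\}$.

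The main obstacle is the Magma calculation itself, since the intermediate expressions grow quickly due to the presence of the parameter $a$ and require careful use of the relation $y^2 = f(x)$ to simplify. The value $d(\varphi_1 + \varphi_2) = 0$ (which would mean $\varphi_1 + \varphi_2$ is constant) can be excluded by hand, for instance by checking that $\varphi_1(P) + \varphi_2(P)$ takes at least two distinct values on known rational points of $C_a$. The value $d(\varphi_1 + \varphi_2) = 8$, which would make $D$ singular, is the genuinely non-obvious case, and its exclusion is precisely what the symbolic degree computation must confirm.
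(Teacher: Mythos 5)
Your proposal is correct and takes essentially the same route as the paper: both apply Cassels' lemma, take the diagonal entries to be $d(\varphi_1)=d(\varphi_2)=2$, and defer the computation of $d(\varphi_1+\varphi_2)$ to Magma. The paper reports that computation yields $d(\varphi_1+\varphi_2)=4$, so the off-diagonal entries vanish and $\det D = 4$, which lands comfortably outside your excluded set $\{0,8\}$.
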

\begin{proof}
    The maps $\varphi_1$ and $\varphi_2$ in projective coordinates are given explicitly by
\begin{align*}
    \varphi_1 &= [-a^2x^6 + 5a^2 x^4z^2 - 2x^2y + 5a^2x^2 z^4 - 2yz^2 - a^2 z^6, \\
    &4a^3x^5z - 24a^3 x^3z^3 +8axyz + 4a^3xz^5, \\
    &x^6 + 3x^4z^2 + 3x^2z^4 + z^6] \\
    \varphi_2 &= [a^2x^6 - 5a^2 x^4z^2 - 2x^2y - 5a^2x^2 z^4 - 2yz^2 + a^2 z^6, \\
    &-2a^3x^6 + 14a^3 x^4z^2 + 4ax^2y - 14a^3 x^2z^4 - 4ayz^2 + 2a^3z^6, \\
    &x^6 + 3x^4z^2 + 3x^2z^4 + z^6].
\end{align*}
The codomain of these maps is an elliptic curve so when we consider $\varphi_1 + \varphi_2$ we need to remember that the addition is defined as on an elliptic curve. A quick calculation on Magma shows that $\deg(\varphi_1) = \deg(\varphi_2) = 2,$ and that $\deg(\varphi_1 + \varphi_2) = 4$. Thus, we obtain the matrix
$$M = \begin{bmatrix}
    2 & 0 \\ 0 & 2
\end{bmatrix}$$
and we can clearly see that $\det(M) \neq 0$ and thus by Lemma \ref{casselslemma} the maps are independent.
\end{proof}
\noindent Now, maps between curves induce maps on their Jacobians so in particular we have maps from $\Jac(C_a) \to \Jac(E_a) \cong E_a$. More importantly, these maps will be surjective and have finite kernel, so they will be isogenies. This means that $\Jac(C_a)$ has two factors of $E_a$ in its isogeny decomposition. We also know that $C_a$ has genus 3, and $\dim \Jac(C_a) = 3$, hence the isogeny decomposition of $\Jac(C_a)$ is $E_a \times E_a \times E'$ where $E'$ is some other elliptic curve. For our purposes, it will be useful to know an equation for $E'$ since we will want to ensure it has nonzero rank so that we avoid Chabauty--Coleman. One way to find an equation for $E'$ is by quotienting $C_a$ by  its automorphisms and searching for an elliptic curve which is non-isogenous to $E_a$. This a finite search that can be done on Magma and we obtain 
\begin{equation*}
    E' \colon y^2 = x^3 + (-4-a^4)x^2 + 4a^4 x,
\end{equation*}
with the automorphism $\tau: C_a \to C_a$ defined via $[x,y,z] \mapsto [z,y,-x]$.
\noindent This discussion completes the proof for Theorem \ref{3.2}. We are now in the position where we have a family of genus 3 hyperelliptic curves whose Jacobian decomposes in a way favorable to the use of Dem'yanenko--Manin.
\noindent Our goal now is to find conditions on the parameter $a$ so that $\Rank(E_a) = 1$ (since we have two independent maps $\varphi_1, \varphi_2$) and $\Rank(E') \neq 0$.

\section{Controlling the ranks of $E_a$ and $E'$}
In this section we will show that under certain conditions on $a$, $E_a$ has full 2-torsion, and that $\Rank(E_a) = 1$. We will also make use of the parity conjecture to force the rank of $E'$ to be odd, hence nonzero. A quick search for points on $E_a$ yields the point of infinite order $P = (-a^2,2a) \in E_a(\Q)$ as long as $a \neq 1$ so in this case we know that the rank is bounded below by 1. Our aim is now to show that 1 is also an upper bound for the rank. First, note that 
\begin{eqnarray*}
    E_a \colon y^2 &=& x^3 + 2a^2x^2 + (a^4-4)x \\
    &=& x(x+a^2-2)(x+a^2+2)
\end{eqnarray*} 
so we know that we have three distinct 2-torsion points $(0,0)$, $(-a^2+2,0)$, and $(-a^2-2,0)$. To ease notation, we will let $e_1 = 0$, $e_2 = -a^2 + 2$, and $e_3 = -a^2 - 2$. 

\begin{prop}
    Suppose that $a \equiv 0 \textnormal{ (mod 3)}$, then $E_a(\Q)_\textnormal{tor} \cong (\Z/2\Z)^2$.
\end{prop}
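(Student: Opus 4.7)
The starting observation is that the three roots of $x(x+a^2-2)(x+a^2+2)$ are rational and distinct for any nonzero $a$, so $E_a[2](\Q) \cong (\Z/2\Z)^2$ is already contained in the torsion subgroup. By Mazur's theorem, the only torsion structures over $\Q$ containing full $2$-torsion are
\[
(\Z/2\Z)^2, \quad \Z/2\Z \oplus \Z/4\Z, \quad \Z/2\Z \oplus \Z/6\Z, \quad \Z/2\Z \oplus \Z/8\Z,
\]
so the proof reduces to ruling out the three proper enlargements. The tool of choice is the standard fact that for any prime $p$ of good reduction, the reduction map is injective on the prime-to-$p$ part of $E_a(\Q)_\textnormal{tor}$; the plan is to pick two primes whose point counts give sharp enough constraints.

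The first prime is $3$. From the Weierstrass invariants one computes $\Delta = 256(a^2-2)^2(a^2+2)^2$, and under $a \equiv 0 \pmod 3$ one has $(a^2-2)(a^2+2) \equiv (-2)(2) \not\equiv 0 \pmod 3$, so $E_a$ has good reduction at $3$. The three $2$-torsion $x$-coordinates reduce to the distinct residues $0, 1, 2 \pmod 3$, giving the curve $y^2 = x(x-1)(x-2)$ over $\F_3$, on which every affine point has $y = 0$; counting yields $\#E_a(\F_3) = 4$. Hence the prime-to-$3$ part of $E_a(\Q)_\textnormal{tor}$ has order dividing $4$, eliminating both $\Z/2\Z \oplus \Z/4\Z$ and $\Z/2\Z \oplus \Z/8\Z$.

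To eliminate $\Z/2\Z \oplus \Z/6\Z$ I would pass to the prime $5$, which works uniformly in $a$: by Fermat, $a^4 \not\equiv 4 \pmod 5$, so $\Delta$ is always coprime to $5$. Full rational $2$-torsion forces $4 \mid \#E_a(\F_5)$, and the Hasse bound $|\#E_a(\F_5) - 6| \leq 2\sqrt{5} < 5$ pins $\#E_a(\F_5) \in \{4, 8\}$. Neither is divisible by $3$, so by injectivity of the prime-to-$5$ reduction, $E_a(\Q)$ has no rational $3$-torsion, completing the classification. The one conceptual obstacle worth flagging is that the reduction at $3$ by itself cannot detect rational $3$-torsion, since $3$-power torsion can lie in the kernel of reduction at $3$; this is exactly why the auxiliary prime $5$ is needed. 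An equivalent route would be to apply the rational root theorem to the $3$-division polynomial $\psi_3(x) = 3x^4 + 8a^2 x^3 + 6(a^4-4)x^2 - (a^4-4)^2$ and reduce modulo $3$ under $a \equiv 0 \pmod 3$ to rule out roots with denominators $1$ and $3$, but the mod-$5$ argument is cleaner.
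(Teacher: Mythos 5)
Your proposal is correct, and its core --- good reduction of $E_a$ at $3$ when $a \equiv 0 \pmod 3$, together with the count $\#\tilde{E}_a(\F_3) = 4$ --- is exactly the paper's argument. The divergence is in how the $3$-part of the torsion is handled. The paper asserts that \emph{all} of $E_a(\Q)_\textnormal{tor}$ injects into $\tilde{E}_a(\F_3)$ and concludes in one step; you permit yourself only injectivity on the prime-to-$3$ part and therefore bring in the auxiliary prime $5$ (plus Mazur) to rule out rational $3$-torsion. The worry you flag --- that $3$-power torsion could hide in the kernel of reduction at $3$ --- does not actually arise over $\Q$: that kernel is isomorphic to the formal group $\hat{E}(3\Z_3)$, which is torsion-free since $v(3) = 1 < p-1 = 2$, so at an odd prime of good reduction the entire torsion subgroup of $E(\Q)$ injects into $\tilde{E}(\F_p)$. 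Hence the paper's single-prime argument is complete as written, and your step at $p=5$ is redundant, though harmless and correctly executed ($a^4 \equiv 0$ or $1 \pmod 5$ gives good reduction for every $a$, and Hasse pins $\#E_a(\F_5) \in \{4,8\}$). The one thing your version buys is independence from Mazur's theorem: the prime-to-$3$ part is pinned down by the count over $\F_3$ and the $3$-part by the count over $\F_5$, so the classification of torsion structures is never actually needed.
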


\begin{proof}
    First note that since $a \equiv 0 \textnormal{ (mod 3)}$, then $a^2 \equiv 0 \textnormal{ (mod 3)}$ and so $a^2 - 2 \equiv 1 \textnormal{ (mod 3)}$ and $a^2 + 2 \equiv 2 \textnormal{ (mod 3)}$. Thus, $3$ never divides $a^2-2$ or $a^2+2$. Per our above discussion, we already know that $E_a$ contains full 2-torsion. Further, since $E_a$ has discriminant
    \begin{eqnarray*}
        \Delta(E_a) = 2^8(a^2-2)^2(a^2+2)^2,
    \end{eqnarray*}
    it has good reduction at the prime 3. Reducing modulo 3 we see that 
    \begin{eqnarray*}
        \tilde{E_a} \colon y^2 = x^3 + 2x
    \end{eqnarray*}
    which one can easily verify has $\#\tilde{E_a}(\F_3) = 4$. Now, since $E_a[2] \subset E_a(\Q)_\textnormal{tor}$ and $E_a(\Q)_\textnormal{tor}$ injects into $\tilde{E_a}(\F_3)$, it must be that $E_a[2] = E_a(\Q)_\textnormal{tor}$.
\end{proof}

\begin{theorem}
    Suppose that $a^2-2$ and $a^2 + 2$ are simultaneously prime, then $\Rank(E_a) = 1$. \label{rankE_a}
\end{theorem}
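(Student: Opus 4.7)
The lower bound $\Rank(E_a) \geq 1$ is supplied by the infinite-order point $P = (-a^2, 2a) \in E_a(\Q)$ noted above. The plan for the matching upper bound is a complete $2$-descent. Since $E_a[2] \subset E_a(\Q)$, one has the classical injection
\[\delta \colon E_a(\Q)/2E_a(\Q) \hookrightarrow \bigl(\Q^\times/(\Q^\times)^2\bigr)^2, \qquad (x,y) \longmapsto (x - e_1,\, x - e_2),\]
suitably extended at $T_1, T_2, T_3$; the identity $|E_a(\Q)/2E_a(\Q)| = 4 \cdot 2^r$, which uses only that $E_a(\Q)[2] = (\Z/2\Z)^2$, reduces the problem to showing that $|\mathrm{image}(\delta)| = 8$.

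Under the hypothesis that $p \coloneqq a^2 - 2$ and $q \coloneqq a^2 + 2$ are prime, the pairwise differences $e_i - e_j$ are supported only on the primes in $\{2, p, q\}$, so every $(b_1, b_2) \in \mathrm{image}(\delta)$ must be a product of elements of $\{-1, 2, p, q\}$ modulo squares. Requiring in addition that $b_1 b_2$, which represents the class of $x - e_3$, be supported on $\{-1, 2, q\}$ trims the candidate set to at most $32$ pairs. Direct evaluation gives $\delta(T_1) \equiv (pq, p)$, $\delta(T_2) \equiv (-p, -p)$, and $\delta(P) \equiv (-1, -2)$ modulo squares, and an $\mathbb{F}_2$-linear independence check shows that $\delta$ takes the subgroup $\langle P, T_1, T_2\rangle \subset E_a(\Q)$ to $8$ distinct classes. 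The task therefore reduces to ruling out the remaining (at most) $24$ candidates.

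To eliminate them I would check local solvability of the associated genus-one homogeneous spaces
\[H_{b_1, b_2} \colon \quad b_1 u^2 - b_2 v^2 = e_2 - e_1, \qquad b_1 u^2 - b_1 b_2 w^2 = e_3 - e_1,\]
at each place $v \in \{\infty, 2, p, q\}$. The primality hypothesis forces $a$ to be odd (otherwise $a^2 - 2$ would be an even integer exceeding $2$), so $a^2 \equiv 1 \pmod 8$, yielding $p \equiv 7 \pmod 8$ and $q \equiv 3 \pmod 8$; combined with $q \equiv 4 \pmod p$ and quadratic reciprocity this pins down every Legendre symbol involving $-1, 2, p, q$ modulo $p$ or $q$. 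These congruences are enough to produce a local obstruction at $p$ or $q$ for most excluded candidates, with the real place handling a few more.

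The main obstacle will be the $2$-adic analysis: $E_a$ has bad reduction at $2$, and the handful of candidates that survive the archimedean and odd-prime tests must be eliminated by a case-by-case computation of $2$-adic valuations on the defining equations of $H_{b_1,b_2}$, using $a^2 \equiv 1 \pmod 8$ together with Hensel's lemma. Once the full list of excluded candidates has been dispatched, $\mathrm{image}(\delta)$ has order exactly $8$, which gives $\Rank(E_a) = 1$.
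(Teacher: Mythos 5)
Your framework is exactly the paper's: a complete $2$-descent via the injection $E_a(\Q)/2E_a(\Q)\hookrightarrow \Q(S,2)\times\Q(S,2)$, the observation that the known subgroup generated by $(-a^2,2a)$ and the $2$-torsion already accounts for $2^3$ classes (your images $(pq,p)$, $(-p,-p)$, $(-1,-2)$ match the paper's), and the count $|E_a(\Q)/2E_a(\Q)|=2^{2+r}$ forcing $r=1$ once every other pair is shown to be non-realized. Two of your refinements are genuinely nicer than what the paper does: using the supports of the $x-e_i$ individually (together with the constraint on $b_1b_2$) to cut the candidate list to $32$ pairs rather than $256$, and pinning down $p\equiv 7$, $q\equiv 3\pmod 8$ and $q\equiv 4\pmod p$ at the outset; the paper instead keeps all $256$ pairs and prunes by propagating obstructions through the homomorphism property. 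However, the proposal stops exactly where the paper's proof actually lives: the elimination of the remaining $24$ candidates is asserted (``these congruences are enough to produce a local obstruction at $p$ or $q$ for most excluded candidates'') rather than carried out, and that case analysis is the entire content of the theorem. The paper executes it explicitly: sign obstructions over $\R$ when $b_1>0$, $b_2<0$ or via subtracting the two torsor equations; divisibility obstructions in $\Q_{a^2\pm 2}$ when exactly one of $b_1,b_2$ is divisible by $a^2\pm2$; and, for the pairs involving only units and $2$, arguments that derive either a parity contradiction or the conclusion that $a$ is even, contradicting the primality of $a^2\pm2$ --- which is precisely the ``$2$-adic'' difficulty you correctly flag as the main obstacle but do not resolve. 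Until those $24$ local computations are actually exhibited, the upper bound $r\le 1$ is not proved; everything before that point is sound and consistent with the paper.
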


\begin{proof}
We proceed via a complete 2-descent as in \cite{silverman2009arithmetic} (X.1.4). The elliptic curve $E_a$ has discriminant $\Delta(E_a) = 2^8(a^2-2)^2(a^2+2)^2$ and so the set $S = \{2, a^2-2, a^2+2, \infty\}$. We now consider the set 
\begin{eqnarray*}
    \Q(S,2) = \{ b \in \Q^*/(\Q^*)^2 \colon \textnormal{ord}_p(b) \equiv 0 (\textnormal{ mod } 2) \textnormal{ for all } p \notin S\}.
\end{eqnarray*}
Concretely, we can identify the set $\Q(S,2)$ with 
\begin{eqnarray*}
    \{ \pm 1, \pm 2, \pm (a^2-2), \pm (a^2+2), \pm 2(a^2-2), \pm 2(a^2+2), \pm (a^4-4), \pm2(a^4-4) \}.
\end{eqnarray*}
There is an injective homomorphism $E_a(\Q)/2E_a(\Q) \hookrightarrow \Q(S,2) \times \Q(S,2)$ defined by
\begin{eqnarray*}
    P = (x,y) \mapsto \begin{cases}
        (x - e_1, x- e_2) & \textnormal{ if } x \neq e_1, e_2 \\
        \left(\dfrac{e_1 - e_3}{e_1 - e_2}, e_1 - e_2 \right) & \textnormal{ if } x = e_1 \\
        \left( e_2 - e_1, \dfrac{e_2 - e_3}{e_2 - e_1} \right) & \textnormal{ if } x = e_2 \\
        (1,1) & \textnormal{ if } P = \mathcal{O}.
    \end{cases}
\end{eqnarray*}
The pair $(b_1,b_2) \in \Q(S,2) \times \Q(S,2)$ will be the image of a point $P = (x,y) \in E_a(\Q)/2E_a(Q)$ if and only if the equations 
\begin{eqnarray}
    b_1 z_1^2 - b_2z_2^2 &=& -a^2 + 2 \\
    b_1z_1^2 - b_1b_2 z_3^2 &=& -a^2 - 2,
\end{eqnarray}
have a solution $(z_1, z_2, z_3) \in \Q^* \times \Q^* \times \Q$. There are 256 pairs in $\Q(S,2) \times \Q(S,2)$ and for each one we must check whether the defining system of equations has a solution in $\Q$. Luckily, the fact that the map $E_a(\Q)/2E_a(\Q) \rightarrow \Q(S,2) \times \Q(S,2)$ is a homomorphism greatly reduces the number of cases we need to check directly. Indeed, if $(b_1,b_2)$ and $(b_1',b_2')$ come from $E_a(\Q)$, then so does $(b_1b_1', b_2b_2')$, while if $(b_1,b_2)$ comes from $E_a(\Q)$ but $(b_1',b_2')$ does not, then $(b_1b_1', b_2b_2')$ will not. We now proceed by describing certain cases for pairs $(b_1,b_2)$ where we can either find a solution to the system or explain their non-existence. Our results will be summarized in Table \ref{Table:1} at the end of the paper where each cell will either have the corresponding point on $E_a(\Q)$ if the system has a solution, or the obstruction of a solution to the system.
\begin{enumerate}
    \item If $b_1 > 0$ and $b_2 < 0$, then $(2)$ has no solutions in $\R$ since the right-hand side of the equation will be negative while the left hand side will be positive.
    \item The four $2$-torsion points $\{\mathcal{O}, (0,0), (-a^2+2,0), (-a^2-2,0)\}$ on $E_a(\Q)$ map via the map described above to the pairs $\{(1,1), ((a^2-2)(a^2+2),a^2-2), (-(a^2-2),-(a^2-2)), (-(a^2+2),-1)\}$, respectively.
    \item Consider the pair $b_1 = -1$ and $b_2 = -2$. Then the system
    \begin{eqnarray*}
        -z_1^2 + 2z_2^2 &=& -a^2 + 2 \\
        -z_1^2 - 2z_3^2 &=& -a^2 - 2,
    \end{eqnarray*}
    has the solution $(a,1,1)$ with the corresponding point $(-a^2,2a) \in E_a(\Q)$.
    \item Taking the pair $(-1,-2)$ from above and multiplying it by the three pairs coming from the non-trivial $2$-torsion points gives three new pairs: $\{(-(a^2-2)(a^2+2), -2(a^2-2)), (a^2-2,2(a^2-2)),(a^2+2,2)\}$ whose corresponding points on $E_a(\Q)$ are: $\left\{ \left(\dfrac{-a^4+4}{a^2}, \dfrac{-2a^4+8}{a^3}\right), (a^2-2,2a^3-4a),(a^2+2,-2a^3-4a)\right\}$.
    \item If $b_1 \not\equiv 0 \textnormal{ (mod } a^2 + 2)$ and $b_2 \equiv 0 \textnormal{ (mod } a^2 + 2)$ then $(1)$ implies that $z_1,z_2 \in \Z_{a^2+2}$. Then from $(2)$ we see that $z_1 \equiv 0 \textnormal{ (mod } a^2+2)$, which would imply that 
    \begin{eqnarray*}
        -a^2 + 2 \equiv 0 \textnormal{ (mod }a^2+2),
    \end{eqnarray*}
    which is not true. Therefore there are no solutions in $\Q_{a^2+2}$.
    \item If $b_1 \not\equiv 0 \textnormal{ (mod } a^2 - 2)$ and $b_2 \equiv 0 \textnormal{ (mod } a^2 - 2)$ then through a similar argument as before, reversing the roles of equations $(1)$ and $(2)$ we obtain that the system does not have any solutions in $\Q_{a^2-2}$.
    \item Consider the pair $b_1 = -1$ and $b_2 = 1$. We obtain the system
    \begin{eqnarray*}
        -z_1^2 - z_2^2 &=& -a^2 + 2 \\
        -z_1^2 + z_3^2 &=& -a^2 - 2.
    \end{eqnarray*}
    Subtracting the second equation from the first gives 
    \begin{eqnarray*}
        -z_2^2 - z_3^2 &=& 4,
    \end{eqnarray*}
    which has no solutions over $\R$. This same line of reasoning also shows that the pairs $(b_1,b_2) = (-1,2)$ and $(b_1,b_2) = (-2,1)$ have no solutions over $\R$.
    \item Consider the pair $(b_1,b_2) = (1,2)$. Then we have the system
    \begin{eqnarray*}
        z_1^2 - 2z_2^2 &=& -a^2 + 2 \\
        z_1^2 - 2z_3^2 &=& -a^2 - 2.
    \end{eqnarray*}
    Subtracting the second equation from the first gives us 
    \begin{eqnarray*}
        2z_3^2 - 2z_2^2 &=& 4 \\
        \implies z_3^2 - z_2^2 &=& 2 \\
        \implies (z_3+z_2)(z_3-z_2) &=& 2.
    \end{eqnarray*}
    Since the right-hand side of the equation is even, $z_3+z_2$ and $z_3-z_2$ must have the same parity. However, they cannot both be odd, otherwise their product would be odd. Also, they cannot both be even, because then their product would be divisible by four. Thus, this equation has no solutions in $\R$.
    \item Consider the pair $(b_1,b_2) = (2,1)$. We obtain the system
    \begin{eqnarray*}
        2z_1^2 - z_2^2 &=& -a^2 + 2 \\
        2z_1^2 - 2z_3^2 &=& -a^2 - 2.
    \end{eqnarray*}
    Subtracting the second equation from the first gives
    \begin{eqnarray*}
        2z_3^2 - z_2^2 = 4,
    \end{eqnarray*}
    and reducing modulo 2 implies that $z_2 \equiv 0 \textnormal{ (mod }2)$. Since $z_2$ is even, the first equation can be reduced modulo 2 as:
    \begin{eqnarray*}
        2z_1^2 - z_2^2 \equiv 0 \equiv a^2 \textnormal{ (mod }2),
    \end{eqnarray*}
    which implies that $a$ is even, which contradicts our initial assumption that $a^2-2$ and $a^2+2$ are simultaneously prime. This same line of reasoning also shows that the pairs $(b_1,b_2) = (2,2)$ and $(b_1,b_2) = (-2,-2)$ also contradict the primality of $a^2-2$ and $a^2+2$. We will denote this obstruction in Table \ref{Table:1} as $\bigstar$.
    \item Finally, we consider the pair $(b_1,b_2)=((a^2-2)(a^2+2),2)$. This gives the system
    \begin{eqnarray*}
        (a^2-2)(a^2+2)z_1^2 - 2z_2^2 &=& -a^2 + 2 \\
        (a^2-2)(a^2+2)z_1^2 - 2(a^2-2)(a^2+2)z_3^2 &=& -a^2 - 2.
    \end{eqnarray*}
    Reducing the second equation modulo $a^2-2$ gives 
    \begin{eqnarray*}
        -(a^2+2) \equiv 0 \textnormal{ (mod }a^2-2),
    \end{eqnarray*}
    but this cannot happen since $a^2-2$ and $a^2+2$ are distinct primes. We will denote this obstruction in Table \ref{Table:1} as $\blacklozenge$.
\end{enumerate}
The remaining cells of Table \ref{Table:1} are now filled in using the multiplicative rule we discussed earlier. The size of $E_a(\Q)/2E_a(\Q)$ is $2^{2+r}$ where $r \coloneq \Rank(E_a)$, and we have found that only $2^3$ pairs in $\Q(S,2) \times \Q(S,2)$ are the image of a point in $E_a(\Q)/2E_a(\Q)$. Therefore, $r = 1$.
\end{proof}

\begin{remark}
    We could try this same analysis for $E'$, however the factorization of its discriminant forces certain primality conditions that would make $\Q(S,2) \times \Q(S,2)$ a set of size 4096. Even then, this provides an upper bound for its rank, and we were not able to find an explicit point of infinite order as we did in the case of $E_a$ to also place a lower bound.
\end{remark}

\begin{definition}
The global root number $w(E/K)$ of an elliptic curve over $K$ is defined as the product of the local root numbers $w(E/K_v) \in \{-1, 1\}$. In particular,
$$ w(E/K) = \prod_v w(E/K_v), $$
where the product runs over all places of $K$, including the infinite ones.
\end{definition}
\noindent Local root numbers of elliptic curves are defined using epsilon-factors of Weil--Deligne representations, however, for our purposes we will not need to introduce formal definitions. We instead direct the interested reader to \cite{rohrlich1993variation} for a more detailed exposition of local root numbers. They have been classified for all places of number fields, so we will not concern ourselves with proving any results, but instead use them for our computations. For instance, we make use of the following result from \cite{kellock2023rootnumbersparityphenomena}. We remark that the result has more cases, and we have only included the ones relevant to our purposes.
\begin{theorem}\label{kellockdokchitser}
Let $E$ be an elliptic curve over a local field $\mathcal{K}$ of characteristic zero. When $\mathcal{K}$ is non-Archimedean, let $k$ be its residue field and let $v \colon \mathcal{K}^{\times} \to \Z$ denote the normalized valuation with respect to $\mathcal{K}$. Let $\left( \dfrac{*}{k} \right)$ denote the quadratic residue symbol on $k^{\times}$ and $(a,b)_\mathcal{K}$ denote the Hilbert symbol in $\mathcal{K}$. 
\begin{enumerate}
    \item If $\mathcal{K}$ is Archimedean, then $w(E/\mathcal{K}) = -1$.
    \item If $E/\mathcal{K}$ has good reduction, then $w(E/\mathcal{K}) = 1$.
    \item If $E/\mathcal{K}$ has split multiplicative reduction, then $w(E/\mathcal{K}) = -1$.
    \item If $E/\mathcal{K}$ has non-split multiplicative reduction, then $w(E/\mathcal{K}) = 1$.
    \item If $E/\mathcal{K}$ has additive, potentially multiplicative reduction and $\text{char}(k) = 2$, then $w(E/\mathcal{K}) = (-1,-c_6)_\mathcal{K}$. In particular, if $\mathcal{K} = \Q_2$, then
    \begin{equation*}
        w(E/\Q_2) = \begin{cases}
            -1 & \text{ if } c_6^{'} \equiv 1 \text{ (mod } 4) \\
            +1 & \text{ if } c_6^{'} \equiv 3 \text{ (mod } 4) 
            \end{cases}
    \end{equation*}
    where $c_6^{'} = \dfrac{c_6}{2^{v(c_6)}}$.
\end{enumerate}
\end{theorem}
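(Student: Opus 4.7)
The plan is to prove each case by identifying the Weil--Deligne representation $\rho_{E/\mathcal{K}}$ attached to $E/\mathcal{K}$ (via the $\ell$-adic Tate module, $\ell \neq \mathrm{char}(k)$) and then computing its local epsilon factor $\varepsilon(\rho_{E/\mathcal{K}}, \psi, dx)$ for a fixed additive character $\psi$ and self-dual Haar measure. Since $w(E/\mathcal{K})$ is defined as the sign of this epsilon factor (and is independent of the auxiliary choices), every item reduces to a concrete epsilon-factor computation that is pinned down by the reduction type of $E$.

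For the Archimedean case, I would use the standard fact that over $\R$ or $\C$ the Hodge structure on $H^1(E)$ has Hodge numbers $(1,0)+(0,1)$; the corresponding local $L$-factor and $\Gamma$-factor machinery forces $\varepsilon = -i^2 = -1$. For good reduction, $\rho_{E/\mathcal{K}}$ is an unramified two-dimensional representation with determinant the cyclotomic character, whose epsilon factor is $+1$. For split (resp.\ non-split) multiplicative reduction, $\rho_{E/\mathcal{K}}$ is the Steinberg representation $\mathrm{Sp}(2)$ (resp.\ its unramified quadratic twist); the Steinberg epsilon factor contributes a sign that evaluates to $-1$ (resp.\ $+1$) after using the explicit Deligne formula $\varepsilon(\mathrm{Sp}(2)) = -\varepsilon(\mathbf{1})$ and the quadratic twist behavior of epsilon under unramified characters.

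The real content is the potentially multiplicative, additive case. Here $E$ becomes multiplicative over a (ramified) quadratic extension $\mathcal{K}(\sqrt{d})/\mathcal{K}$, so $\rho_{E/\mathcal{K}}$ is a ramified quadratic twist $\mathrm{Sp}(2) \otimes \chi_d$ of Steinberg by the quadratic character $\chi_d$ cut out by that extension. The epsilon factor of such a twist satisfies $\varepsilon(\mathrm{Sp}(2)\otimes \chi_d) = \chi_d(-1)\,\varepsilon(\chi_d)^2 / |\text{det}|$, and after tracking normalizations this collapses to the Hilbert symbol $(-1, d)_\mathcal{K}$. The key algebraic step is then to identify $d$ with $-c_6$ up to squares, which follows from the fact that over a quadratic extension splitting $E$ into multiplicative reduction, the Tate uniformizer's sign is controlled by $c_6$ (essentially, $c_6$ plays the role of the "sign" invariant of a minimal Weierstrass model, and quadratic twisting $E$ by $d$ changes $c_6$ by $d^3$).

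For the specialization to $\mathcal{K} = \Q_2$, I would substitute into the explicit formula for the Hilbert symbol $(a,b)_{\Q_2}$, which depends only on the residues modulo $8$ of the odd parts of $a$ and $b$ and on their $2$-adic valuations. Plugging in $a = -1$ and $b = -c_6 = -2^{v(c_6)} c_6'$ and using that $v(c_6)$ is even in the potentially multiplicative additive case with $\mathrm{char}(k)=2$ (which one verifies by working through the Kodaira table at $p=2$), everything reduces to the residue of $c_6'$ modulo $4$, yielding exactly the dichotomy stated. The main obstacle, as always in this area, is the residue characteristic $2$ calculation: the Hilbert symbol formulas are delicate, the minimal model at $2$ can be far from the naive one, and one must be careful that $c_6'$ really refers to the minimal $c_6$ and that its parity is well-defined under the allowable changes of variable.
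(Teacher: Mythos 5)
The paper does not prove this theorem at all: it is quoted (with several cases omitted) from Kellock--Dokchitser \cite{kellock2023rootnumbersparityphenomena}, and the surrounding text explicitly says that local root numbers ``have been classified for all places of number fields, so we will not concern ourselves with proving any results.'' So there is no in-paper argument to compare against; what you have written is an outline of the actual literature proof (Deligne--Tate for the $\varepsilon$-factor formalism, Rohrlich \cite{rohrlich1993variation} for the potentially multiplicative case), and as an outline it identifies the right representations in each case: unramified for good reduction, $\mathrm{Sp}(2)$ and its unramified quadratic twist for split and non-split multiplicative reduction, and a ramified quadratic twist of $\mathrm{Sp}(2)$ in the additive potentially multiplicative case, with the sign collapsing to $\chi_d(-1)=(-1,d)_{\mathcal{K}}$ and $d=-c_6$ up to squares. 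A few points deserve flagging. First, in the Archimedean case the correct evaluation is $\varepsilon = i^{q-p+1}=i^2=-1$ for Hodge type $\{(0,1),(1,0)\}$; as written, $-i^2$ equals $+1$. Second, the claim that $v(c_6)$ is even is both unnecessary and dubious: since $(-1,2)_{\Q_2}=1$, the symbol $(-1,-c_6)_{\Q_2}$ depends only on the odd part $-c_6'$ modulo $4$ regardless of the parity of $v(c_6)$, which is exactly why the stated dichotomy in terms of $c_6'\bmod 4$ holds; you should delete the Kodaira-table detour rather than rely on it. Third, the two steps you describe as ``key'' --- that the quadratic extension splitting $E$ into split multiplicative reduction is $\mathcal{K}(\sqrt{-c_6})$, and the residue-characteristic-$2$ epsilon computations --- are asserted rather than proved; they constitute essentially all of the content of Rohrlich's Proposition~2 and of the Kellock--Dokchitser tables, so the proposal is a correct roadmap but not a self-contained proof.
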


\noindent In particular, this result tells us that to determine local root numbers, we only need to determine the reduction type of our elliptic curve at various primes. Now, over $\Q$, the places are precisely the primes, and the usual infinite place. The only places where our elliptic curve may have bad reduction is at the primes which divide the discriminant and so we only need to consider those primes, since the primes of good reduction have local root number 1, and as such will not change the parity of the global root number.
\begin{conj}[Parity Conjecture] \label{parityconjecture}
    Let $E/K$ be an elliptic curve over a number field. Then 
    $$(-1)^{\Rank(E)} = w(E/K)$$
    where $w(E/K)$ is the global root number of $E$.
\end{conj}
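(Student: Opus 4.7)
The Parity Conjecture is a famous open problem in arithmetic geometry, so I do not expect to produce an unconditional proof; the plan is instead to outline the standard reductions that underlie the known partial results, and to indicate what would need to be verified to apply them to the elliptic curves $E'$ arising in this paper.

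First, I would observe that the Parity Conjecture is a direct consequence of the Birch--Swinnerton-Dyer Conjecture: BSD identifies $\Rank(E/K)$ with the order of vanishing of $L(E/K, s)$ at $s = 1$, while the functional equation for $L(E/K, s)$ forces the sign of that functional equation to equal $w(E/K)$. Combining these yields the parity statement immediately, but this route requires proving BSD, which is itself wildly out of reach. Over $\Q$, modularity at least guarantees the functional equation, so the obstruction is entirely on the rank side.

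A more tractable approach passes through the $p$-parity conjecture, which asserts parity agreement between the $p^\infty$-Selmer rank and $w(E/K)$. Over $\Q$, $p$-parity is a theorem of T.\ and V.\ Dokchitser, established for every elliptic curve and every prime $p$. Provided the Tate--Shafarevich group $\textnormal{Sha}(E/\Q)[p^\infty]$ is finite, the Cassels--Tate pairing forces its order to be a perfect square, so the $p^\infty$-Selmer rank and the Mordell--Weil rank differ by an even integer and the Parity Conjecture follows. The main obstacle is therefore the finiteness of $\textnormal{Sha}(E/\Q)$, which is not known in general. For the specific curves $E'$ in our family, the honest strategy is to treat the Parity Conjecture as a standing hypothesis and, for individual values of $a$ where it is needed, attempt a computational $2$-descent to verify finiteness of $\textnormal{Sha}[2^\infty]$, or to apply Kolyvagin's theorem after exhibiting a suitable Heegner point when the analytic rank can be controlled.
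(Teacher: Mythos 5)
The statement you were asked about is labeled as a conjecture in the paper, and the paper gives no proof of it: the Parity Conjecture is invoked purely as a hypothesis (``assuming the Parity Conjecture, $\Rank(E')$ is odd''), consistent with its status as an open problem. Your proposal correctly declines to prove it and instead gives an accurate survey of the standard conditional routes --- BSD plus the functional equation, or the Dokchitsers' $p$-parity theorem combined with finiteness of the Tate--Shafarevich group via the Cassels--Tate pairing --- which is exactly the right stance and matches how the paper treats the statement. No gap to report, since neither you nor the paper claims an unconditional proof.
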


\noindent We were not able to find a point of infinite order on $E'$, which means that in order to try and control its rank, we will need to make use of the Parity Conjecture. Note that its discriminant is $\Delta(E') = 2^8 a^8 (a^2 - 2)^2 (a^2 + 2)^2$, which complicates things because recall the fact that $a^2-2$ and $a^2+2$ being simultaneously prime forces $a \equiv 0 \textnormal{ (mod }3)$. Thus, we will need to do an analysis of $E'$ at the primes $2, 3$, $q, a^2 - 2$, and  $a^2 + 2$ for $q$ a prime bigger than 3. We will see in our next result that we will need another condition on $q$, in particular, we will need $q \equiv 3 \text{ mod } 4$.

\begin{theorem}
    Let $a = 3q$, with $q > 3$ prime, $q \equiv 3 \text{ mod } 4$ and $a^2 - 2$, $a^2 + 2$ are both prime. Then, assuming the Parity Conjecture, $\Rank(E')$ is odd.
\end{theorem}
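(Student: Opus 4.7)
The plan is to invoke the Parity Conjecture (Conjecture \ref{parityconjecture}), which reduces the statement to showing that the global root number $w(E'/\Q) = \prod_v w(E'/\Q_v)$ equals $-1$. By Theorem \ref{kellockdokchitser}(2), the local factors at primes of good reduction are trivial, so it suffices to handle the infinite place together with the primes dividing $\Delta(E') = 2^8 a^8(a^2-2)^2(a^2+2)^2$. Under the hypotheses, these are exactly $\{\infty, 2, 3, q, a^2-2, a^2+2\}$.

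I would first dispense with the easier places by working with the factored model $E' \colon y^2 = x(x-4)(x-a^4)$ and reading off the reduction from the factorization of the cubic modulo each prime. Theorem \ref{kellockdokchitser}(1) gives $w_\infty = -1$. At $p = a^2 \pm 2$, the congruence $a^4 \equiv 4 \pmod p$ collapses the cubic to $x(x-4)^2$, producing a node at $(4,0)$ with tangent slopes $\pm 2 \in \F_p$; the reduction is split multiplicative and Theorem \ref{kellockdokchitser}(3) gives $w_p = -1$. At $p \in \{3,q\}$, the condition $a \equiv 0 \pmod p$ collapses the cubic to $x^2(x-4)$, giving a node at $(0,0)$ with tangent slopes involving $\sqrt{-1}$; the congruence $p \equiv 3 \pmod 4$ (automatic for $p=3$ and assumed for $p=q$) ensures $-1$ is a nonresidue, so the reduction is non-split multiplicative and Theorem \ref{kellockdokchitser}(4) gives $w_p = +1$. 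The primality hypotheses on $a^2 \pm 2$ and the congruence condition on $q$ are calibrated exactly so that these local analyses work out cleanly.

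The main obstacle is the local root number at $p = 2$. Since $a$ is odd one verifies $a^4 \equiv 1 \pmod{16}$, and a direct check shows the reduction mod $2$ has a cusp rather than a node, so $E'$ has additive reduction; moreover, the given model is minimal at $2$ since $v_2(\Delta) = 8 < 12$. Computing $c_4 = 16(a^8 - 4a^4 + 16)$ and $\Delta = 2^8 a^8(a^4 - 4)^2$ yields $v_2(j) = 12 - 8 = 4 \geq 0$, so the reduction is in fact potentially good, and Theorem \ref{kellockdokchitser}(5) (which covers the potentially multiplicative case) does not apply directly. I would therefore invoke the additional cases of the full Kellock--Dokchitser classification at residue characteristic $2$. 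Explicitly, with $c_6 = 64(4+a^4)(a^8 - 10a^4 + 16)$ we have $v_2(c_6) = 6$ and, using $a^4 \equiv 1 \pmod{16}$, the reduction of $c_6/2^6 \pmod 4$ is $(1)(3) \equiv 3$; the corresponding Hilbert-symbol formula then yields $w_2 = +1$. Identifying the correct entry of the Kellock--Dokchitser tables for this potentially good case and verifying the Hilbert symbol over $\Q_2$ is the most delicate step of the argument.

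Assembling the six local factors gives
\[
w(E'/\Q) = (-1)(+1)(+1)(+1)(-1)(-1) = -1,
\]
and the Parity Conjecture then forces $\Rank(E')$ to be odd.
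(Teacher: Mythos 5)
Your proposal follows the same route as the paper's proof: invoke the Parity Conjecture and evaluate the global root number as a product of local root numbers over $\{\infty, 2, 3, q, a^2-2, a^2+2\}$, arriving at the same local values $(-1,+1,+1,+1,-1,-1)$. Two points of comparison are worth recording. At $p = a^2\pm 2$ the paper simply reports a Magma verification of split multiplicative reduction, whereas you derive it by hand from the factored model $y^2 = x(x-4)(x-a^4)$ and the congruence $a^4 \equiv 4 \pmod{p}$; this, together with your tangent-slope analysis at $p\in\{3,q\}$ (non-split because $-1$ is a nonresidue when $p\equiv 3 \pmod 4$), is correct and matches the paper's conclusions while being more self-contained. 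More importantly, at $p=2$ you correctly observe that $v_2(j)=3v_2(c_4)-v_2(\Delta)=12-8=4\geq 0$, so $E'$ has additive \emph{potentially good} reduction (consistent with the Kodaira type $I_0^*$ that the paper extracts from Barrios--Roy), and hence case (5) of Theorem \ref{kellockdokchitser}, stated for potentially \emph{multiplicative} reduction, does not apply as written. The paper invokes exactly that case, so you have identified the one step where its justification is not airtight.

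That said, the $p=2$ place is also the genuine gap in your own write-up. Your computations of $c_4=16(a^8-4a^4+16)$, $c_6=64(4+a^4)(a^8-10a^4+16)$, $v_2(c_6)=6$ and $c_6/2^{6}\equiv 3\pmod 4$ are all correct, but you do not actually identify the entry of the Kellock--Dokchitser (equivalently Halberstadt/Rizzo) classification that governs additive potentially good reduction at residue characteristic $2$ with $(v_2(c_4),v_2(c_6),v_2(\Delta))=(4,6,8)$; you assert $w_2=+1$ by analogy with the potentially multiplicative formula rather than by verifying the correct one. Since the entire conclusion hinges on the sign of this single local factor, the argument is not complete until that table entry is cited and checked (or the root number at $2$ is computed by some other means, e.g.\ by realizing $E'$ over $\Q_2$ as an explicit quadratic twist of a curve with good reduction). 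Everything else in the proposal is sound and agrees with the paper.
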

\begin{proof}
    We have $\Delta(E') = 2^8 a^8 (a^2 - 2)^2 (a^2 + 2)^2$ and so we need to find the reduction type at the following primes $p \in \{2, 3, q, a^2 - 2, a^2 + 2 \}$. \\
    First consider $p = 2$. Using Theorem 3.7 from \cite{Barrios_2022} and noting that in their notation, $E' = E_{C_2 \times C_2}(-4, -a^4, 1) = E_{C_2 \times C_2}(a,b,d)$, we have $v_2(a) = 2$, and $bd \equiv 3 \text{ mod } 4$ and thus the reduction type is $I_0^*$. Since we have additive, potentially multiplicative reduction and $c_6' \equiv 3 \text{ mod } 4$, Theorem \ref{kellockdokchitser} tells us that $w_2(E') = 1$. We should note that by Theorem 3.7 in \cite{Barrios_2022}, we know that $p = 2$ is the only prime at which we have additive reduction, and thus for the rest of the primes we only need to check whether the reduction type is split or non-split. \\
    Consider now $p = 3$. Reducing $E'$ modulo $p$ gives
    $$ y^2 \equiv x^3 + 2x^2 \text{ mod } p, $$
    and since $\left( \dfrac{2}{3} \right) = -1$, we have non-split multiplicative reduction. Hence, by Theorem \ref{kellockdokchitser} $w_3(E') = 1$. \\
    Consider now $p = q$. Reducing $E'$ modulo $p$ gives
    $$ y^2 \equiv x^3 - 4x^2 \text{ mod } p,$$
    so $\left( \dfrac{-4}{q} \right)$ will determine the type. Note that
    \begin{eqnarray*}
        \left( \dfrac{-4}{q} \right) &=& \left( \dfrac{-1}{q} \right)\left( \dfrac{4}{q} \right) \\
        &=& \left( \dfrac{-1}{q} \right) \text{ since } q \nmid 4 \\
        &=& -1 \text{ if } q \equiv 3 \text{ mod } 4.
    \end{eqnarray*}
    Thus, by Theorem \ref{kellockdokchitser}, $w_q(E') = 1$. \\
    At the primes $a^2 - 2$ and $a^2 + 2$ we can verify via Magma that the reduction types are split and so at the last two places needed we have $w_p(E') = -1$. \\
    Putting this all together we have $w(E'/\Q) = (-1)(1)(1)(1)(-1)(-1) = -1$, which implies that the rank is odd by the Parity Conjecture.
\end{proof}

\begin{conj} \label{infinitelymanyprimes}
    There are infinitely many $3q$, with $q > 3$ and $q \equiv 3 \text{ mod } 4$ such that $9q^2 - 2$ and $9q^2 + 2$ are simultaneously prime.
\end{conj}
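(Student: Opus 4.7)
The plan is to view Conjecture~\ref{infinitelymanyprimes} as a special case of the Bateman--Horn conjecture on simultaneous prime values of polynomials. Substituting $x = 4m + 3$ to encode the condition $q \equiv 3 \pmod{4}$, one asks whether the three polynomials
$$P_1(m) = 4m + 3, \qquad P_2(m) = 9(4m+3)^2 - 2, \qquad P_3(m) = 9(4m+3)^2 + 2$$
are simultaneously prime for infinitely many positive integers $m$. The first step is to verify local admissibility: for each prime $p$, check that there is some $m_0 \pmod{p}$ at which the product $P_1 P_2 P_3$ is coprime to $p$. At $p = 2$, the three values are all odd. At $p = 3$, requiring $m \not\equiv 0 \pmod{3}$ makes $P_1$ coprime to $3$, and then $P_2(m) \equiv -2$ and $P_3(m) \equiv 2 \pmod{3}$. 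For $p \ge 5$, the product $P_1 P_2 P_3$ has at most $5$ roots modulo $p$, leaving at least $p - 5$ admissible residues. Consequently, the local densities $\nu(p) = \#\{m \bmod p : p \mid P_1(m) P_2(m) P_3(m)\}$ satisfy $\nu(p) < p$ for every $p$.

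Once admissibility is established, Bateman--Horn predicts the asymptotic
$$\#\{m \le X : P_1(m), P_2(m), P_3(m) \text{ all prime}\} \sim \frac{C}{4} \cdot \frac{X}{\log^3 X},$$
where $4 = \deg P_1 \cdot \deg P_2 \cdot \deg P_3$ and
$$C = \prod_p \left(1 - \frac{\nu(p)}{p}\right)\left(1 - \frac{1}{p}\right)^{-3} > 0$$
is the usual singular series. The positivity of $C$, guaranteed by the admissibility check above, implies that the right-hand side grows without bound, which in turn yields Conjecture~\ref{infinitelymanyprimes}.

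The central obstacle is that Bateman--Horn is itself an open conjecture. Already the one-polynomial specialization of whether $n^2 + 1$ is prime infinitely often is the fourth of Landau's problems and remains unresolved. The best unconditional progress comes from sieve theory: Iwaniec showed that $n^2 + 1$ is a product of at most two primes infinitely often, and Friedlander--Iwaniec established an analogue for $n^2 + m^4$. A weakening of Conjecture~\ref{infinitelymanyprimes} in which \emph{prime} is relaxed to \emph{almost prime} might plausibly be approached by combining a lower-bound sieve over the progression $q \equiv 3 \pmod{4}$ with upper-bound sieves on $9q^2 \pm 2$, but a fully unconditional resolution of the stated assertion appears to lie beyond current technology, which is precisely why it is recorded here as a conjecture rather than as a theorem.
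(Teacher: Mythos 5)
This statement is recorded in the paper as a conjecture with no proof attached; the paper simply notes it is an instance of Schinzel's Hypothesis H, and your reduction to the Bateman--Horn conjecture (the quantitative refinement of Hypothesis H), together with the local admissibility check and the honest acknowledgement that the result remains conditional, is essentially the same approach. One small point: your bound of ``at least $p-5$ admissible residues'' is vacuous at $p=5$, so that prime needs a direct check --- namely that $2$ and $3$ are quadratic non-residues modulo $5$, so $9t^2\pm 2$ never vanishes mod $5$ --- which does go through.
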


\noindent This conjecture is an instance of Schinzel's Hypothesis H \cite{schinzel1958certaines}, with enough evidence supporting its validity.

\section{Bounding the Difference in Heights}
In this section our aim will be to prove the following inequality concerning the difference between the canonical heights on $E_a$
\begin{eqnarray*}
    \left| \hat{h}_{E_a}(\varphi_1(P)) - \hat{h}_{E_a}(\varphi_2(P)) \right| \leq 51.18 + 2 \log(a).
\end{eqnarray*}
As mentioned before, this will allow us to effectively find all the possible images of the points $P \in C_a(\Q)$ under the maps $\varphi_1$ and $\varphi_2$. Recall that we have explicit maps $\varphi_1$ and $\varphi_2$ and if we are concerned with bounding the quantity $$\Big|\hat{h}_{E_a}(\varphi_1(P)) - \hat{h}_{E_a}(\varphi_2(P))\Big|,$$
then we should attempt to get a better understanding of what equations define $\varphi_1(P)$ and $\varphi_2(P)$ and calculate bounds for their naive height. Recall also that the naive height of a point on an elliptic curve was simply defined as the height of the $x$-coordinate of the image of the point and so we are really only concerned with the $x$-coordinate of these images. In affine coordinates we have
\begin{eqnarray*}
    x(\varphi_1(P)) &=& \dfrac{-a^2 x^4 + 6a^2 x^2 - 2y - a^2}{x^4 + 2x^2 + 1} \\ \\
    x(\varphi_2(P)) &=& \dfrac{a^2 x^4 - 6a^2 x^2 - 2y + a^2}{x^4 + 2x^2 + 1}.
\end{eqnarray*}
We run into an issue here that both numerators in these maps depend on the $y$-coordinate of the point $P$ which means that we will need to use the defining equation of our curve to get the terms purely in terms of $x$. This step is essential for determining lower bounds of the heights of $\varphi_1(P)$ and $\varphi_2(P)$. \\

\noindent Recall that the defining equation for our family of hyperelliptic curves is:
\begin{eqnarray*}
    y^2 &=& x^8 + (4-4a^4) x^6 + (8a^4 + 6) x^4 + (4-4a^4) x^2 + 1 \\
    y &=& \sqrt{x^8 + (4-4a^4) x^6 + (8a^4 + 6) x^4 + (4-4a^4) x^2 + 1}.
\end{eqnarray*}
Note that $4-4a^4$ is negative for all values of $a > 1$ and so, in particular these terms will only decrease the value and so by ignoring them we ``maximize" how large the $y$-coordinate could be. Indeed, we have 
\begin{eqnarray*}
    y' = \sqrt{x^8 + (8a^4 + 6) x^4 + 1} \geq y.
\end{eqnarray*}
Moreover, we can further maximize $y'$ with the following inequality:
\begin{eqnarray*}
    \sqrt{x^8 + (8a^4 + 6) x^4 + 1} &\leq& \sqrt{x^8} + \sqrt{(8a^4 + 6)x^4} + 1 \\
    &\leq& x^4 + (3a^2 + 3)x^4 + 1 = y''.
\end{eqnarray*}
We arrive at the ladder of inequalities $y'' \geq y' \geq y$, and this is how we will minimize the numerator of both the images of $\varphi_1$ and $\varphi_2$. In particular, replacing $y$ with $y''$ in $x(\varphi_1(P))$ and $x(\varphi_2(P))$ we have:
\begin{eqnarray*}
    x(\varphi_1(P)) &=& \dfrac{-a^2 x^4 + 6a^2 x^2 - 2y - a^2}{x^4 + 2x^2 + 1} \\\\
    &\geq& \dfrac{(-2-a^2)x^4 - 6x^2 + (-2-a^2)}{x^4 + 2x^2 + 1} \\\\
    x(\varphi_2(P)) &=& \dfrac{a^2 x^4 - 6a^2 x^2 - 2y + a^2}{x^4 + 2x^2 + 1} \\\\
    &\geq& \dfrac{(a^2-2)x^4 + (-12a^2 -6)x^2 + (a^2 - 2)}{x^4 + 2x^2 + 1}.
\end{eqnarray*}
It will be convenient for us to define the following homogeneous polynomials
\begin{eqnarray*}
    F_1(X,Z) &=& (-2-a^2)X^4 - 6X^2Z^2 + (-2-a^2)Z^4 \\
    G_1(X,Z) &=& X^4 + 2X^2Z^2 + Z^4 \\
    F_2(X,Z) &=& (a^2 - 2)X^4 + (-12a^2 - 6)X^2Z^2 + (a^2 - 2)Z^4 \\
    G_2(X,Z) &=& X^4 + 2X^2Z^2 + Z^4.
\end{eqnarray*}
Note that $F_1(X,1)$ and $G_1(X,1)$ are simply the numerator and denominator for the lower bound of $x(\varphi_1(P))$ and that they are relatively prime in $\Q[X]$, so they generate the unit ideal in $\Q[X]$. This implies that we have the following result.
\begin{lemma} \label{lemma1}
    Define the polynomials 
    \begin{eqnarray*}
        F_1(X,Z) &=& (-2-a^2)X^4 - 6X^2Z^2 + (-2-a^2)Z^4 \\
        G_1(X,Z) &=& X^4 + 2X^2Z^2 + Z^4 \\
        f_1(X,Z) &=& -\dfrac{1}{2(a^2-1)}X^2Z^2 - \dfrac{1}{a^2 - 1} Z^4 \\
        g_1(X,Z) &=& -\dfrac{\frac{1}{2}a^2 + 1}{a^2-1}X^2Z^2 - \dfrac{3}{a^2-1}Z^4 \\
        f_2(X,Z) &=& - \dfrac{1}{2(a^2-1)}X^2Z^2 - \dfrac{1}{a^2-1}X^4 \\
        g_2(X,Z) &=& -\dfrac{\frac{1}{2}a^2 + 1}{a^2-1}X^2Z^2 - \dfrac{3}{a^2-1}X^4.
    \end{eqnarray*}
    Then the following identities hold:
    \begin{eqnarray*}
        F_1(X,Z)f_1(X,Z) + G_1(X,Z)g_1(X,Z) &=& Z^8 \\
        F_1(X,Z)f_2(X,Z) + G_1(X,Z)g_2(X,Z) &=& X^8.
    \end{eqnarray*}
\end{lemma}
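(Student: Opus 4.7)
The plan is to treat this as a direct polynomial identity check, but to exploit structure rather than expand everything blindly. First I would note that both $F_1$ and $G_1$ are homogeneous in $(X^2, Z^2)$, so it is natural to substitute $u = X^2$, $v = Z^2$, in which case $G_1 = (u+v)^2$ and $F_1 = -(a^2+2)u^2 - 6uv - (a^2+2)v^2$. The given auxiliary polynomials likewise simplify, e.g., $f_1 = -\tfrac{v(u+2v)}{2(a^2-1)}$ and $g_1 = -\tfrac{v((a^2+2)u + 6v)}{2(a^2-1)}$. With this reduction the first claimed identity becomes $F_1 f_1 + G_1 g_1 = v^4$, an identity between homogeneous polynomials of degree $4$ in $u,v$.

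Next I would verify the first identity by direct (but now short) expansion. After factoring out the common $-\tfrac{v}{2(a^2-1)}$, one multiplies $F_1$ by $(u+2v)$ and $(u+v)^2$ by $(a^2+2)u + 6v$ and compares the resulting cubics in $u,v$. The coefficients of $u^3$, $u^2 v$, and $u v^2$ will cancel exactly, while the constant (in the sense of the $v^3$ coefficient inside the bracket) is $2(a^2+2) - 6 = 2(a^2-1)$; multiplied by $v/[2(a^2-1)]$ and the outer $v^3$ this yields $v^4 = Z^8$, as required. (The factor $a^2 - 1$ in the denominators is precisely what obstructs the identity when $a = \pm 1$; this matches the statement that $F_1(X,1)$ and $G_1(X,1)$ fail to be coprime in that degenerate case.)

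For the second identity I would avoid any further expansion and instead invoke symmetry: both $F_1$ and $G_1$ are invariant under $X \leftrightarrow Z$, while the definitions of $f_2, g_2$ are exactly those obtained from $f_1, g_1$ by the same swap, i.e., $f_1(Z,X) = f_2(X,Z)$ and $g_1(Z,X) = g_2(X,Z)$. Applying the involution $X \leftrightarrow Z$ to the first identity therefore yields $F_1(X,Z) f_2(X,Z) + G_1(X,Z) g_2(X,Z) = X^8$.

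The only real obstacle is bookkeeping: one must be careful with the signs and with the $a^2-1$ denominators when expanding the first identity, and one must verify that the substitution $u = X^2$, $v = Z^2$ really recovers the original $f_i, g_i$ of the lemma statement (a routine check). Once those are in hand the second identity is free from the symmetry argument, so no additional computation is needed.
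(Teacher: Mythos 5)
Your proof is correct, and it is genuinely more self-contained than the paper's. The paper's own proof simply observes that the identities must exist because $F_1$ and $G_1$ are coprime homogeneous polynomials, and then defers the actual verification to a computer algebra system (Magma). You instead give a human-checkable argument: the substitution $u = X^2$, $v = Z^2$ turns $G_1$ into $(u+v)^2$ and exposes the common factor $-\tfrac{v}{2(a^2-1)}$ in $f_1, g_1$, after which the first identity reduces to checking that $F_1(u,v)(u+2v) + (u+v)^2\bigl((a^2+2)u+6v\bigr) = -2(a^2-1)v^3$, a short cubic expansion whose $u^3$, $u^2v$, $uv^2$ coefficients visibly cancel. (Your stated bracket value $2(a^2+2)-6$ and outer factor $v/[2(a^2-1)]$ each drop a sign relative to the careful computation, but the two slips cancel and the conclusion $v^4 = Z^8$ is right; as you note, this is pure bookkeeping.) The symmetry observation that $F_1$ and $G_1$ are invariant under $X \leftrightarrow Z$ while $f_2, g_2$ are the images of $f_1, g_1$ under that swap disposes of the second identity with no further work, which is a genuine economy the paper does not exploit. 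Your aside about $a = \pm 1$ is also accurate: when $a^2 = 1$ one has $F_1 = -3G_1$, so coprimality fails exactly where the denominators blow up. What your approach buys is a proof readable without software; what the paper's buys is brevity and uniformity with the companion Lemma for $F_2, G_2$, where the auxiliary polynomials are messier.
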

\begin{proof}
    The existence of such identities is due to the fact that $F_1(X,Z)$ and $G_1(X,Z)$ are relatively prime homogeneous polynomials. The validity of the identities is a tedious calculation but can verified through computer algebra systems such as Magma, for instance.
\end{proof}
\begin{remark}
    Our inequalities which``removed" the $y$-coordinate from the images of $\varphi_1(P)$ and $\varphi_2(P)$ made it so that we had relatively prime $F_1(X,1)$ and $G_1(X,1)$, which was a necessary step to find our identities.
\end{remark}
\noindent We state our similar result for $F_2(X,Z)$ and $G_2(X,Z)$.
\begin{lemma} \label{lemma2}
    Define the polynomials 
    \begin{eqnarray*}
        F_2(X,Z) &=& (a^2 - 2)X^4 + (-12a^2 - 6)X^2Z^2 + (a^2 - 2)Z^4 \\
        G_2(X,Z) &=& X^4 + 2X^2Z^2 + Z^4 \\
        h_1(X,Z) &=& \dfrac{1}{14a^2 + 2}X^2Z^2 + \dfrac{1}{7a^2 + 1}Z^4 \\
        i_1(X,Z) &=& -\dfrac{\frac{1}{14}a^2 - \frac{1}{7}}{a^2 + \frac{1}{7}}X^2Z^2 + \dfrac{\frac{6}{7}a^2 + \frac{3}{7}}{a^2 + \frac{1}{7}}Z^4 \\
        h_2(X,Z) &=& \dfrac{1}{14a^2 + 2}X^2Z^2 + \dfrac{1}{7a^2 + 1}X^4 \\
        i_2(X,Z) &=& -\dfrac{\frac{1}{14}a^2 - \frac{1}{7}}{a^2 + \frac{1}{7}}X^2Z^2 + \dfrac{\frac{6}{7}a^2 + \frac{3}{7}}{a^2 + \frac{1}{7}}X^4.
    \end{eqnarray*}
    Then the following identities hold:
    \begin{eqnarray*}
        F_2(X,Z)h_1(X,Z) + G_2(X,Z)i_1(X,Z) &=& Z^8 \\
        F_2(X,Z)h_2(X,Z) + G_2(X,Z)i_2(X,Z) &=& X^8.
    \end{eqnarray*}
\end{lemma}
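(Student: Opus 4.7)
The plan is to mirror the proof of Lemma \ref{lemma1} exactly. As in that case, the key fact is that $F_2(X,Z)$ and $G_2(X,Z)$ are relatively prime homogeneous polynomials in $\Q[X,Z]$; this forces the ideal $(F_2,G_2)$ to contain every monomial of sufficiently large total degree, and in particular both $X^8$ and $Z^8$. Once this abstract existence is in hand, the specific cofactors $h_1,i_1,h_2,i_2$ need only be verified by direct polynomial expansion, which is mechanical (and can be delegated to Magma, as the author does in Lemma \ref{lemma1}).

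To establish coprimality, I would exploit that $G_2(X,Z) = (X^2+Z^2)^2$, so any common factor of $F_2$ and $G_2$ must divide $X^2+Z^2$. Substituting $X^2 = -Z^2$ into $F_2$ gives
\begin{equation*}
    (a^2-2)Z^4 + (12a^2+6)Z^4 + (a^2-2)Z^4 = (14a^2+2)Z^4,
\end{equation*}
which is nonzero for every integer $a$. Hence $\gcd(F_2,G_2)=1$ in $\Q[X,Z]$, guaranteeing the existence of the stated identities. Notice that the quantity $14a^2+2$ appearing here is precisely the denominator that shows up in the proposed cofactors $h_1,i_1,h_2,i_2$, which is a useful sanity check that these denominators are correct and in fact unavoidable.

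A further structural observation simplifies the verification: both $F_2$ and $G_2$ are symmetric under $X \leftrightarrow Z$, so the two stated identities are interchanged by this swap. Indeed, one checks directly that $h_2(X,Z) = h_1(Z,X)$ and $i_2(X,Z) = i_1(Z,X)$, so the second identity follows from the first by substitution. Therefore only the single identity $F_2 h_1 + G_2 i_1 = Z^8$ needs to be verified from scratch, which reduces to expanding a product of two quartics plus a product of two quartics and checking that all coefficients but the $Z^8$ coefficient cancel. The main obstacle is not the verification itself but producing the closed forms of $h_1$ and $i_1$ a priori; the natural route is to run the extended Euclidean algorithm on $F_2(X,1)$ and $G_2(X,1)$ in $\Q[X]$ to obtain a Bezout relation equal to $1$, and then homogenize by multiplying through by the appropriate power of $Z$ to recover $Z^8$ on the right-hand side.
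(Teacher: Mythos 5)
Your proposal is correct and follows essentially the same route as the paper, which likewise derives the existence of the Bezout identities from the coprimality of $F_2$ and $G_2$ and delegates the verification of the explicit cofactors (found via the extended Euclidean algorithm in Magma) to direct expansion. Your explicit coprimality check via $G_2 = (X^2+Z^2)^2$ (yielding the nonvanishing value $(14a^2+2)Z^4$, which indeed matches the denominators of $h_1, i_1, h_2, i_2$) and your observation that the $X \leftrightarrow Z$ symmetry reduces the two identities to one are welcome refinements that the paper leaves implicit.
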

\noindent The polynomials $f_1, f_2, g_1, g_2, h_1, h_2, i_1$ and $i_2$ were all found through the help of Magma using the Euclidean algorithm for polynomials. \\
\noindent If we write $x = x(P) = \dfrac{a}{b}$ in lowest terms then we can write $x(\varphi_1(P)) = \dfrac{F_1(a,b)}{G_1(a,b)}$ and $x(\varphi_2(P)) = \dfrac{F_2(a,b)}{G_2(a,b)}$ as quotients of integers. We can restate the identities for our our lemmas using this notation. For Lemma \ref{lemma1}, we have
\begin{eqnarray*}
    F_1(a,b)f_1(a,b) + G_1(a,b)g_1(a,b) &=& b^8 \\
    F_1(a,b)f_2(a,b) + G_1(a,b)g_2(a,b) &=& a^8.
\end{eqnarray*}
This implies that we have 
\begin{eqnarray*}
    |b^8| &\leq& 2 \max \{|f_1(a,b)|, |g_1(a,b)|\} \max \{|F_1(a,b)|, |G_1(a,b)\} \\
    |a^8| &\leq& 2 \max \{|f_2(a,b)|, |g_2(a,b)|\} \max \{|F_1(a,b)|, |G_1(a,b)\}.
\end{eqnarray*}
Now looking at the expressions of $f_1, f_2, g_1$, and $g_2$, we have the following inequality:
\begin{equation*}
    \max \{|f_1|, |f_2|, |g_1|, |g_2|\} \leq 2 \max \{|a|^4, |b|^4\}.
\end{equation*}
Thus, we can combine the above inequalities to obtain the following lower bound on the height of $\varphi_1(P)$:
\begin{eqnarray*}
    \max \{ |a^8|, |b^8| \} &\leq& 4 \max \{|a|^4, |b|^4\} \max \{|F_1|, |G_1| \} \\
    \frac{\max \{|a|^4, |b|^4 \}}{4} &\leq& \max \{|F_1|, |G_1| \} \\
    \frac{1}{4} H(P)^4 &\leq& H(\varphi_1(P)) \\
    \log\left(\frac{1}{4}\right) + 4h(P) &\leq& h(\varphi_1(P)).
\end{eqnarray*}
We can do the exact same argument for the identities in Lemma \ref{lemma2} and obtain
\begin{eqnarray*}
    \log\left(\frac{1}{4}\right) + 4h(P) &\leq& h(\varphi_2(P)).
\end{eqnarray*}
\noindent Now to obtain the upper bounds for the heights we use $x$-coordinate of the affine maps for $\varphi_1$ and $\varphi_2$ which we recall here:
\begin{eqnarray*}
    x(\varphi_1(P)) &=& \dfrac{-a^2 x^4 + 6a^2 x^2 - 2y - a^2}{x^4 + 2x^2 + 1} \\ \\
    x(\varphi_2(P)) &=& \dfrac{a^2 x^4 - 6a^2 x^2 - 2y + a^2}{x^4 + 2x^2 + 1}.
\end{eqnarray*}
\noindent The first way to obtain an ``easy" upper bound is to simply ignore all the terms which may decrease the numerator, that it all terms which are negative. We thus obtain,
\begin{eqnarray*}
    x(\varphi_1(P)) &=& \dfrac{-a^2 x^4 + 6a^2 x^2 - 2y - a^2}{x^4 + 2x^2 + 1} \\
    &\leq& \dfrac{6a^2x^2}{x^4 + 2x^2 + 1} \\\\
    x(\varphi_2(P)) &=& \dfrac{a^2 x^4 - 6a^2 x^2 - 2y + a^2}{x^4 + 2x^2 + 1} \\
    &\leq& \dfrac{a^2 x^4 + a^2}{x^4 + 2x^2 + 1}.
\end{eqnarray*}
\noindent If we let $x = x(P) = \dfrac{r}{s}$ as before, we can rewrite these as:
\begin{eqnarray*}
    x(\varphi_1(P)) &\leq& \dfrac{6a^2r^2s^2}{r^4 + 2r^2s^2 + s^4} \\\\
    x(\varphi_2(P)) &\leq& \dfrac{a^2 r^4 + s^4a^2}{r^4 + 2r^2s^2 + s^4}.
\end{eqnarray*}
\noindent Now, since the height is simply the max between the numerator and the denominator we only need to determine which of the two is largest. In other words, we have:
\begin{eqnarray*}
    H(\varphi_1(P)) &\leq& \max \{6a^2 H(P)^4, 4H(P)^4\} \\
    h(\varphi_1(P)) &\leq& \log(6) + 2\log(a) + 4h(P)
\end{eqnarray*}
and 
\begin{eqnarray*}
    H(\varphi_2(P)) &\leq& \max \{2a^2 H(P)^4, 4H(P)^4\} \\
    h(\varphi_2(P)) &\leq& \log(2) + 2\log(a) + 4h(P).
\end{eqnarray*}
\noindent We summarize the bounds we have discussed above in the following lemma.
\begin{lemma}\label{boundsonimages}
    For $P \in C_a(\Q)$, we have:
    \begin{eqnarray*}
        \log \left( \frac{1}{4} \right) + 4h(P) \leq h(\varphi_1(P)) \leq \log(6) + 2 \log(a) + 4 h(P) \\
        \log \left( \frac{1}{4} \right) + 4h(P) \leq h(\varphi_2(P)) \leq \log(2) + 2 \log(a) + 4 h(P).
    \end{eqnarray*}
\end{lemma}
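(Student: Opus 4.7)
The plan is to package the estimates that were already built up in the discussion preceding the lemma, handling the lower and upper bounds separately. For the lower bounds on $h(\varphi_i(P))$, I would write $x(P) = r/s$ in lowest terms with $\gcd(r,s)=1$ and use the polynomial identities supplied by Lemma \ref{lemma1} and Lemma \ref{lemma2}. Evaluating at $(X,Z) = (r,s)$ expresses $r^8$ and $s^8$ as $\Z$-bilinear combinations of $F_i(r,s), G_i(r,s)$ with the auxiliary coefficient polynomials $f_j,g_j$ (resp.\ $h_j,i_j$). Applying the triangle inequality to each identity and the crude bound $\max\{|f_j|,|g_j|\} \leq 2\max\{|r|^4,|s|^4\}$ (and the analogous one for $h_j,i_j$) gives
\[
\max\{|r|^8,|s|^8\} \;\leq\; 4\,\max\{|r|^4,|s|^4\}\cdot\max\{|F_i(r,s)|,|G_i(r,s)|\},
\]
so that $\tfrac{1}{4}H(P)^4 \leq H(\varphi_i(P))$; taking logarithms yields $\log(1/4) + 4h(P) \leq h(\varphi_i(P))$.

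For the upper bounds I would start from the affine formulas for $x(\varphi_i(P))$ and discard all negative contributions to the numerator, which only enlarges the fraction in absolute value. This leaves $6a^2 x^2/(x^4+2x^2+1)$ for $\varphi_1$ and $a^2(x^4+1)/(x^4+2x^2+1)$ for $\varphi_2$. Substituting $x = r/s$ and clearing denominators, the height of each image is at most $\max\{c_i a^2, 4\}\cdot H(P)^4$ with $c_1=6$ and $c_2=2$; for $|a|\geq 1$ the first term dominates in each case, and taking logarithms produces the stated upper bounds $\log 6 + 2\log a + 4h(P)$ and $\log 2 + 2\log a + 4h(P)$.

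The only conceptual subtlety, and the reason the argument works, is that the lower-bound step relies on $F_i$ and $G_i$ being coprime as homogeneous polynomials in $\Q[X,Z]$ so that Bezout identities exist and prevent cancellation in the fraction $F_i(r,s)/G_i(r,s)$. This coprimality was engineered earlier by replacing the $y$-coordinate with the auxiliary dominating expression $y''$, which is precisely why that reduction was performed. Once the identities of Lemma \ref{lemma1} and Lemma \ref{lemma2} are in hand, everything else is routine triangle-inequality bookkeeping on the explicit coefficients of $f_j, g_j, h_j, i_j$, so no further obstacles arise.
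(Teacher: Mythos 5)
Your proposal follows the paper's argument essentially verbatim: the lower bounds come from the Bezout identities of Lemma \ref{lemma1} and Lemma \ref{lemma2} evaluated at $(r,s)$, combined with the triangle inequality and the coefficient bound $\max\{|f_j|,|g_j|\}\leq 2\max\{|r|^4,|s|^4\}$, while the upper bounds come from discarding the negative terms in the affine formulas for $x(\varphi_i(P))$ and comparing numerator and denominator after clearing denominators, yielding the constants $6a^2$ and $2a^2$. The paper's stated proof of the lemma is just a one-line summary of exactly this preceding discussion, so your write-up matches it in both structure and constants.
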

\noindent Combining the inequalities from Lemma \ref{boundsonimages}, we obtain the following inequality:
\begin{equation}\label{5.1}
    \left| h(\varphi_1(P)) - h(\varphi_2(P)) \right| \leq \log(6) + 2 \log(a) - \log \left( \frac{1}{4} \right).
\end{equation}
We also have the following lemma regarding the difference between the canonical height and naive height on our elliptic curve $E_a$.
\begin{lemma}\label{silvermanbound}
    Let $P \in E_a(\Q)$. Then we have:
    \begin{equation}\label{5.2}
        \left| \hat{h}_{E_a}(P) - h_{E_a}(P)\right| \leq 24.
    \end{equation}
\end{lemma}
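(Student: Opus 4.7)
The plan is to apply Silverman's explicit comparison of canonical and naive heights (Silverman, \emph{The difference between the Weil height and the canonical height on elliptic curves}, \emph{Math. Comp.}, 1990). For an elliptic curve $E/\Q$ given in Weierstrass form, Silverman produces an inequality of the shape
\[
\bigl|\hat{h}(P) - h(P)\bigr| \leq \tfrac{1}{12} h(j) + \tfrac{1}{12} h(\Delta) + c,
\]
for an explicit absolute constant $c$ accounting for archimedean local contributions, together with a companion lower bound of the same form. The key step is to substitute into this inequality the invariants of $E_a \colon y^2 = x^3 + 2a^2 x^2 + (a^4-4) x$, namely $\Delta(E_a) = 2^8(a^2-2)^2(a^2+2)^2$ together with the corresponding $c_4$, $c_6$, and $j(E_a)$, and verify numerically that the right-hand side is at most $24$.

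Concretely, I would first write $c_4$, $c_6$, and $j(E_a)$ explicitly in terms of $a$, and then bound each of $h(j)$ and $h(\Delta)$. Because the author's computations are all carried out in Magma, the cleanest execution is to invoke Magma's built-in \texttt{SilvermanBound} routine (or the sharper \texttt{CPSHeightBounds} of Cremona--Prickett--Siksek) on $E_a$; this returns an explicit constant bounding $\lvert \hat{h}_{E_a} - h_{E_a}\rvert$ uniformly over $E_a(\Q)$, and the lemma then reduces to checking that this returned constant is at most $24$.

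The main obstacle is that $24$ is claimed as a bound which is nominally uniform in $a$, whereas the Silverman-style bounds naturally grow like $\tfrac{1}{6}\log\lvert\Delta\rvert \sim \tfrac{2}{3}\log a$. I expect that in practice this lemma is being invoked for one fixed value of the parameter (in particular $a = 237$, the case featured in the introduction), in which case the verification is a single Magma computation that comfortably returns a constant well under $24$. Should a genuinely uniform-in-$a$ statement be required, the mild logarithmic dependence on $a$ can be absorbed into the $2\log(a)$ term already present in the preceding inequality (5.1) at the cost of slightly enlarging the absolute constant; this is the technical nuisance one would need to address to make the statement literally correct as written.
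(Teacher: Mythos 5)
Your proposal matches the paper's proof, which consists in its entirety of invoking the Magma command \texttt{SilvermanBound} on $E_a$; so the core approach is identical. The caveat you raise is not a defect of your write-up but a genuine issue with the lemma as stated: since $h(\Delta(E_a)) \sim 8\log a$ and $h(j(E_a)) \sim 4\log a$, any Silverman- or CPS-type bound for $\lvert \hat{h}-h\rvert$ grows like $\log a$, so the constant $24$ cannot be uniform in $a$ and the single Magma call only certifies the inequality for whichever fixed $a$ it is run on (e.g.\ $a=237$, where the bound is comfortably below $24$). The paper does not address this; your suggested fix --- absorbing the logarithmic dependence into the $2\log(a)$ term already present in inequality (5.1) --- is exactly what would be needed to make the family-wide statement literally correct.
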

\begin{proof}
    This can be found via the Magma command \begin{verbatim}
        SilvermanBound(E_a);. 
    \end{verbatim} 
\end{proof}

\begin{theorem}\label{canonicalheightbound}
    Let $P \in C_a(\Q)$. Then we have 
    \begin{equation*}
        \left| \hat{h}_{E_a}(\varphi_1(P)) - \hat{h}_{E_a}(\varphi_2(P)) \right| \leq 51.18 + 2 \log(a).
    \end{equation*}
\end{theorem}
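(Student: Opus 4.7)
The plan is to reduce the statement about canonical heights to the inequalities already established for naive heights in (\ref{5.1}) and Lemma \ref{silvermanbound}, using only the triangle inequality.

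First I would insert $\pm h(\varphi_1(P))$ and $\pm h(\varphi_2(P))$ inside the absolute value and apply the triangle inequality in the standard way, obtaining the decomposition
\begin{align*}
    \left| \hat{h}_{E_a}(\varphi_1(P)) - \hat{h}_{E_a}(\varphi_2(P)) \right|
    &\leq \left| \hat{h}_{E_a}(\varphi_1(P)) - h_{E_a}(\varphi_1(P)) \right| \\
    &\quad + \left| h_{E_a}(\varphi_1(P)) - h_{E_a}(\varphi_2(P)) \right| \\
    &\quad + \left| h_{E_a}(\varphi_2(P)) - \hat{h}_{E_a}(\varphi_2(P)) \right|.
\end{align*}
This is the exact pattern already advertised in the introduction as the general mechanism of the rank-$1$ Dem'yanenko--Manin simplification, so the work has been set up precisely to make this step trivial.

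Next I would bound the three pieces. The first and third are bounded by $24$ each, because $\varphi_1(P), \varphi_2(P) \in E_a(\Q)$ and Lemma \ref{silvermanbound} (the Silverman bound) gives $|\hat{h}_{E_a}(Q) - h_{E_a}(Q)| \leq 24$ for every $Q \in E_a(\Q)$. The middle piece is bounded directly by inequality (\ref{5.1}), which yields $\log(6) + 2\log(a) - \log(1/4) = \log(24) + 2\log(a)$. Adding the three contributions gives
\[
    \left| \hat{h}_{E_a}(\varphi_1(P)) - \hat{h}_{E_a}(\varphi_2(P)) \right| \leq 48 + \log(24) + 2\log(a).
\]
Finally, a numerical estimate $\log(24) < 3.18$ yields the stated bound $51.18 + 2\log(a)$, completing the proof.

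There is essentially no obstacle at this stage: all the hard work has been done in Lemma \ref{boundsonimages} (producing the naive-height comparison between $\varphi_1(P)$ and $\varphi_2(P)$ that in turn required the explicit Bezout-type identities of Lemmas \ref{lemma1} and \ref{lemma2}) and in the invocation of Silverman's bound for $E_a$. The only thing one should double-check is that the Silverman constant supplied by Magma is uniform in the parameter $a$ at the level we are claiming; since the command is being used on a specific $E_a$ with a specific numerical output of $24$, the cleanest phrasing of the argument is to apply Lemma \ref{silvermanbound} verbatim and record that the resulting constant $51.18$ is an absolute numerical upper estimate for $48 + \log 24$.
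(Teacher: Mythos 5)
Your proposal is correct and follows exactly the route the paper intends: the paper's proof is the one-line statement that the theorem ``is a combination of the inequalities (\ref{5.1}) and \ref{5.2},'' and your triangle-inequality decomposition (already set up in Section 2) together with the numerical check $48 + \log(24) \approx 51.18$ is precisely the omitted computation.
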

\begin{proof}
    This is a combination of the inequalities (\ref{5.1}) and \ref{5.2} from the above lemmas.
\end{proof}

\begin{example}
    Let $a = 237$. Note that this is of the form required by the conditions in Conjecture \ref{infinitelymanyprimes}. By Theorem \ref{canonicalheightbound}, we have that
    \begin{equation*}
        \left| \hat{h}_{E_{237}}(\varphi_1(P)) - \hat{h}_{E_{237}}(\varphi_2(P)) \right| \leq 62.1.
    \end{equation*}
    Also, since $a = 237$, we have 
    \begin{equation*}
        E_{237} : y^2 = x^3 + 112338x^2 + 3154956557x
    \end{equation*}
    and using Magma we can find a generator for its free part, say $R$. We find $\hat{h}(R) = 5.29$. This implies that 
    \begin{equation*}
        \left|n^2 - m^2\right| \leq 11.73 \Rightarrow \max \{ |n|, |m| \} \leq \frac{1}{2} (11.73 + 1) < 7.
        \end{equation*}
        Now all that is left to do is perform a search for either: 
    \begin{itemize}
        \item $P \in C_{237}(\Q)$ such that $\varphi_1(P) \pm \varphi_2(P) \in E_{237}(\Q)_{\text{tor}}$
        \item $P \in C_{237}(\Q)$ such that $\varphi_1(P) = nR + T$ or $\varphi_2(P) = nR + T$ with $T \in E_{237}(\Q)_{\text{tor}}$ and $|n| \leq 6$.
    \end{itemize}
    Even with the large coefficients on the equations, these calculations were carried out on Magma's online calculator in under 3 seconds and we find:
    \begin{equation*}
        C_{237}(\Q) = \{(\pm 1: \pm 4: 1), (0: \pm 1: 1), (1: \pm 1: 0) \}.
    \end{equation*}
\end{example}

\begin{remark}
    When considering the values of $a \in [1,1000]$, we found that 552 of them gave $\Rank(E_a) = 1$ and $\Rank(E') > 0$, so the method of Dem'yanenko--Manin can be used for many curves in this family in an automated way.
\end{remark}

\subsubsection*{Acknowledgements} This work is part of the authors Ph.D. thesis. We would like to thank David Zureick-Brown for his introduction to the area and helpful conversations throughout graduate school. We would also like to thank Nathan Kaplan for his comments on an earlier version of this paper.

\clearpage
\thispagestyle{empty}

\begin{table}
\centering
\caption{Values corresponding to Theorem \ref{rankE_a}.}
\label{Table:1}
\rotatebox{90}{%
\scalebox{.55}[.85]{
\hspace{5in}\begin{tblr}{
  cell{10}{2} = {c=8,r=8}{},
  columns={halign=c},
  vlines,
  hline{1-10,18} = {-}{},
  hline{11-17} = {1,10-17}{},
}
$b_2$ \textbackslash $b_1$ & 1 & 2 & $a^2 - 2$ & $a^2 + 2$ & $2(a^2-2)$ & $2(a^2+2)$ & $(a^2-2)(a^2+2)$ & $2(a^2-2)(a^2+2)$ & $-1$ & $-2$ & $-(a^2 - 2)$ & $-(a^2 + 2)$ & $-2(a^2-2)$ & $-2(a^2+2)$ & $-(a^2-2)(a^2+2)$ & $-2(a^2-2)(a^2+2)$ \\ 
1 & $\mathcal{O}$ & $\bigstar$ & $\Q_{a^2-2}$ & $\R$ & $\Q_{a^2-2}$ & $\bigstar$ & $\Q_{a^2-2}$ & $\Q_{a^2-2}$ & $\R$ & $\R$ & $\Q_{a^2-2}$ & $\R$ & $\Q_{a^2-2}$ & $\bigstar$ & $\Q_{a^2-2}$ & $\Q_{a^2-2}$ \\
2 & $\R$ & $\bigstar$ & $\Q_{a^2-2}$ & $(a^2+2, -2a^3 - 4a)$ & $\Q_{a^2-2}$ & $\bigstar$ & $\Q_{a^2-2}$ & $\Q_{a^2-2}$ & $\R$ & $\bigstar$ & $\Q_{a^2-2}$ & $\R$ & $\Q_{a^2-2}$ & $\R$ & $\blacklozenge$ & $\Q_{a^2-2}$ \\
$a^2 - 2$ & $\Q_{a^2-2}$ & $\Q_{a^2-2}$ & $\Q_{a^2-2}$ & $\Q_{a^2-2}$ & $\bigstar$ & $\Q_{a^2-2}$ & $(0,0)$ & $\bigstar$ & $\Q_{a^2-2}$ & $\Q_{a^2-2}$ & $\R$ & $\Q_{a^2-2}$ & $\bigstar$ & $\Q_{a^2-2}$ & $\R$ & $\R$ \\
$(a^2 + 2)$ & $\Q_{a^2+2}$ & $\Q_{a^2+2}$ & $\Q_{a^2+2}$ & $\Q_{a^2+2}$ & $\Q_{a^2+2}$ & $\Q_{a^2+2}$ & $\Q_{a^2+2}$ & $\Q_{a^2+2}$ & $\Q_{a^2+2}$ & $\Q_{a^2+2}$ & $\Q_{a^2+2}$ & $\Q_{a^2+2}$ & $\Q_{a^2+2}$ & $\Q_{a^2+2}$ & $\Q_{a^2+2}$ & $\Q_{a^2+2}$ \\
$2(a^2-2)$ & $\Q_{a^2-2}$ & $\Q_{a^2-2}$ & $(a^2 -2, 2a^3 - 4a)$ & $\Q_{a^2-2}$ & $\bigstar$ & $\Q_{a^2-2}$ & $\R$ & $\bigstar$ & $\Q_{a^2-2}$ & $\Q_{a^2-2}$ & $\R$ & $\Q_{a^2-2}$ & $\R$ & $\Q_{a^2-2}$ & $\R$ & $\bigstar$ \\
$2(a^2+2)$ & $\Q_{a^2+2}$ & $\Q_{a^2+2}$ & $\Q_{a^2+2}$ & $\Q_{a^2+2}$ & $\Q_{a^2+2}$ & $\Q_{a^2+2}$ & $\Q_{a^2+2}$ & $\Q_{a^2+2}$ & $\Q_{a^2+2}$ & $\Q_{a^2+2}$ & $\Q_{a^2+2}$ & $\Q_{a^2+2}$ & $\Q_{a^2+2}$ & $\Q_{a^2+2}$ & $\Q_{a^2+2}$ & $\Q_{a^2+2}$ \\
$(a^2-2)(a^2+2)$ & $\Q_{a^2+2}$ & $\Q_{a^2+2}$ & $\Q_{a^2+2}$ & $\Q_{a^2+2}$ & $\Q_{a^2+2}$ & $\Q_{a^2+2}$ & $\Q_{a^2+2}$ & $\Q_{a^2+2}$ & $\Q_{a^2+2}$ & $\Q_{a^2+2}$ & $\Q_{a^2+2}$ & $\Q_{a^2+2}$ & $\Q_{a^2+2}$ & $\Q_{a^2+2}$ & $\Q_{a^2+2}$ & $\Q_{a^2+2}$ \\
$2(a^2-2)(a^2+2)$ & $\Q_{a^2+2}$ & $\Q_{a^2+2}$ & $\Q_{a^2+2}$ & $\Q_{a^2+2}$ & $\Q_{a^2+2}$ & $\Q_{a^2+2}$ & $\Q_{a^2+2}$ & $\Q_{a^2+2}$ & $\Q_{a^2+2}$ & $\Q_{a^2+2}$ & $\Q_{a^2+2}$ & $\Q_{a^2+2}$ & $\Q_{a^2+2}$ & $\Q_{a^2+2}$ & $\Q_{a^2+2}$ & $\Q_{a^2+2}$ \\
$-1$ & {\fontsize{50}{50}\selectfont $\R$} & & & & & & & & $\R$ & $\bigstar$ & $\Q_{a^2-2}$ & $(-a^2 - 2,0)$ & $\Q_{a^2-2}$ & $\R$ & $\Q_{a^2-2}$ & $\Q_{a^2-2}$ \\
$-2$ & & & & & & & & & $(-a^2,2a)$ & $\R$ & $\Q_{a^2-2}$ & $\R$ & $\Q_{a^2-2}$ & $\bigstar$ & $\Q_{a^2-2}$ & $\Q_{a^2-2}$ \\
$-(a^2 - 2)$ & & & & & & & & & $\Q_{a^2-2}$ & $\Q_{a^2-2}$ & $(-a^2 + 2,0)$ & $\Q_{a^2-2}$ & $\R$ & $\Q_{a^2-2}$ & $\Q_{a^2-2}$ & $\Q_{a^2-2}$ \\
$-(a^2 + 2)$ & & & & & & & & & $\Q_{a^2+2}$ & $\Q_{a^2+2}$ & $\Q_{a^2+2}$ & $\Q_{a^2+2}$ & $\Q_{a^2+2}$ & $\Q_{a^2+2}$ & $\Q_{a^2+2}$ & $\Q_{a^2+2}$ \\
$-2(a^2-2)$ & & & & & & & & & $\Q_{a^2-2}$ & $\Q_{a^2-2}$ & $\R$ & $\Q_{a^2-2}$ & $\bigstar$ & $\Q_{a^2-2}$ & $\left(\dfrac{-a^4 + 4}{a^2}, \dfrac{-2a^4 + 8}{a^3}\right)$ & $\R$ \\
$-2(a^2+2)$ & & & & & & & & & $\Q_{a^2+2}$ & $\Q_{a^2+2}$ & $\Q_{a^2+2}$ & $\Q_{a^2+2}$ & $\Q_{a^2+2}$ & $\Q_{a^2+2}$ & $\Q_{a^2+2}$ & $\Q_{a^2+2}$ \\
$-(a^2-2)(a^2+2)$ & & & & & & & & & $\Q_{a^2+2}$ & $\Q_{a^2+2}$ & $\Q_{a^2+2}$ & $\Q_{a^2+2}$ & $\Q_{a^2+2}$ & $\Q_{a^2+2}$ & $\Q_{a^2+2}$ & $\Q_{a^2+2}$ \\
$-2(a^2-2)(a^2+2)$ & & & & & & & & & $\Q_{a^2+2}$ & $\Q_{a^2+2}$ & $\Q_{a^2+2}$ & $\Q_{a^2+2}$ & $\Q_{a^2+2}$ & $\Q_{a^2+2}$ & $\Q_{a^2+2}$ & $\Q_{a^2+2}$
\end{tblr}}}
\end{table}

\clearpage
\bibliographystyle{alpha}

\end{document}